\newcommand\beque{\begin{equation*}}
\newcommand\beq{\begin{equation}}
\newcommand\eeq{\end{equation}}
\newcommand\eeque{\end{equation*}}
\newcommand\eqn{\begin{eqnarray}}
\newcommand\beqna{\begin{eqnarray*}}
\newcommand\eeqna{\end{eqnarray*}}
\newcommand\feqn{\end{eqnarray}}
\newcommand{\nn}{\nonumber}
\newcommand\RR{{\mathbb R}}
\newtheorem{lemma}{Lemma}
\newtheorem{propo}{Proposition}
\newtheorem{theorem}{Theorem}
\newtheorem{theorem2}{Theorem}
\newtheorem{corollary}{Corollary}
\numberwithin{equation}{section}
\begin{document}

%\renewcommand{\baselinestretch}{1.3}

%\linespread{1.5}

\date{\today}

\title[]{\textbf{\Large{Uniformization, Unipotent Flows and the Riemann Hypothesis}}}

\author{Sergio L. Cacciatori and  Matteo A. Cardella}

\address{Dipartimento di Fisica e Matematica, Universit\`a dell'Insubria, via Valleggio 11, 22100
Como, Italy, and\\
INFN, sezione di Milano, via Celoria 16, 20133 Milano, Italy}
\email{sergio.cacciatori@mi.infn.it}

\address{Theoretical Physics Department,  Universit\`a di Milano Bicocca,
Piazza della Scienza 3, I-20126 Milano, Italy,}
\email{matteo@phys.huji.ac.il}

\begin{abstract}
We prove equidistribution of certain multidimensional unipotent flows
in the moduli space of  genus $g$ principally polarized abelian varieties (ppav).
This is done by  studying  asymptotics of $\pmb{\Gamma}_{g} \sim  Sp(2g,\mathbb{Z})$-automorphic forms averaged along  unipotent flows,
toward the codimension-one  component of the boundary of the ppav moduli space.
We prove a link between the  error estimate  and  the Riemann hypothesis.
Further, we prove  $\pmb{\Gamma}_{g - r}$ modularity of the   function obtained by iterating
the unipotent average process $r$ times. This  shows  uniformization
of  modular integrals of automorphic functions via unipotent flows.

\end{abstract}

%\pacs{04.70.Dy,04.70.-s,03.65.Pm}
%\pacs{04.62.+v, 04.70.Dy}
\maketitle

\section*{\Large{\textbf{Introduction}}}

\vspace{.27  cm}

Let $\pmb{\mathcal{H}}_g : = \{ \tau \in Mat(g, \mathbb{C}) \, |  \, \tau  = \tau^t , \, \, \Im(\tau) > 0 \}$,
the genus $g$ Siegel half space, i.e. the set of  symmetric complex $g \times g$ matrices $\tau$, with positive definite imaginary part $\Im(\tau)$.
 We indicate with $\pmb{\Gamma}_g \sim Sp(2g,\mathbb{Z})$ the discrete  group of symplectic
transformations, with action on $\tau$ given by

\vspace{.27 cm}

\beq
\tau \rightarrow  (a \tau + b) (c \tau + d)^{-1}, \qquad \begin{pmatrix} a & b \\ c & d   \end{pmatrix} \in Sp(2g,\mathbb{Z}). \nn
\eeq

\vspace{.27 cm}

The coset space $\pmb{\Gamma}_g \backslash \pmb{\mathcal{H}}_g $ is the moduli space
 of   genus $g$ principally polarized abelian varieties (ppav) $\pmb{\mathcal{A}}_g$, and for genera $g = 1,2,3$, $\pmb{\mathcal{A}}_g$
 is isomorphic  to  the moduli space of compact Riemann surfaces $\pmb{\mathcal{M}}_g$.

$\pmb{\mathcal{H}}_g$ is a homogenous space, since it is isomorphic  to the Lie coset $ \pmb{\mathcal{H}}_g \simeq Sp(2g,\mathbb{R}) / \left( Sp(2g,\mathbb{R}) \cap SO(2g, \mathbb{R}) \right)$, the set of   real symplectic matrices over the orthosymplectic ones.

By Iwasawa decomposition of a symplectic matrix in  $Sp(2g,\mathbb{R})$, one can find an interesting set of coordinates
for the Lie coset  $\pmb{\mathcal{A}}_g$.
 Every  symplectic matrix can be written as  $\pmb{U} \pmb{A} \pmb{K}$, with $ \pmb{K} \in   Sp(2g,\mathbb{R}) \cap SO(2g, \mathbb{R})$,
$\pmb{A} = {\rm{diag}}( \pmb{V}_{g} , \pmb{V}_{g}^{-1})$, diagonal $2g \times 2g$ matrix, $(\pmb{V}_{g} : = {\rm{diag}}(\sqrt{v_1},\dots,\sqrt{v}_g ), v_i > 0, i = 1,\dots,g)$, and

\vspace{.27 cm}

$$\pmb{U} = \begin{pmatrix} \pmb{U}_{g} & \, \, \pmb{W}_{g}\pmb{U}_{g}^{-t} \\ 0   & \, \, \pmb{U}_{g}^{-t} \end{pmatrix}$$

\vspace{.27 cm}

is a $2g \times 2g$ real unipotent  matrix,
with $\pmb{U}_{g}$ upper unitriangular $g \times g$ real matrix, and $\pmb{W}_{g}$   symmetric $g \times g$ real matrix.

\vspace{.27 cm}

This leads to the following Iwasawa parametrization of the Siegel Half space

\vspace{.27 cm}

\beq
\tau_g = \pmb{W}_{g} + i \pmb{U}_{g} \pmb{V}^{2}_g \pmb{U}_{g}^{-t}. \label{Iwtau}
\eeq

\vspace{.27 cm}

 In this paper we  exploit    Iwasawa parametrization, in conjunction with  the Rankin-Selberg method,
 for investigating properties of a certain class of unipotent  averages  of automophic forms
 on $\pmb{\Gamma}_g \backslash \pmb{\mathcal{H}}_g $.
From the behavior of those averages in the   asymptotics  limit  toward
the  codimension one  component of the  $ \pmb{\Gamma}_g \backslash \pmb{\mathcal{H}}_g $ boundary,
 we prove    ergodicity of  (multidimensional) unipotent flows.
%  Error estimates appearing  in those asymptotics turn out to depend on  a power of  one of the  Iwasawa coordinates
%related to the abelian part $\pmb{V}_g$. This coordinate controls the
%distance of a given point  from the $(g - 1)$ corank component $\mathbb{F}_{g - 1}$ of the $\pmb{\mathcal{H}}_g$ boundary.
It turns out that the error estimate   depends  on $\Theta : = {\rm{Sup}}\{\Re(\rho)| \zeta^{*}(\rho) = 0 \}$,
the superior of the real part of the non trivial zeros of the Riemann
zeta function. Therefore, evaluation of  the error estimate would prove or disprove  the Riemann hypothesis. Our  result
  generalizes a well known theorem by  Zagier  on  the long  horocycle average of a $SL(2,\mathbb{Z})$ automorphic  functions of rapid decay [Za1],[Za2].

\vspace{.27 cm}

In order to  announce
the two main results of this paper, we need to introduce  further notations.
Given $\tau_g \in \pmb{\mathcal{H}}_g$, we use a $(g - r)$-corank block  decomposition
$\tau_g = \begin{pmatrix} \tau_r & \tau_2 \\ \tau_{2}^t &  \tau_{g - r}  \end{pmatrix}$,
with $\tau_r \in \pmb{\mathcal{H}}_{r}$, $\tau_{g - r} \in \pmb{\mathcal{H}}_{g - r}$.

\vspace{.27 cm}

In the $r = 1$ case,  the Iwasawa coordinatization (\ref{Iwtau}) gives

\vspace{.27 cm}

\beq
\tau_g = \begin{pmatrix} \tau_1 & \tau_2 \\ \tau_{2}^t &  \tau_{g - 1}  \end{pmatrix}
= \begin{pmatrix}
w_{11} + i(v_{1}  + \pmb{\underline{u}} \pmb{V}_{g - 1}^{2} \pmb{\bar{u}})   & \pmb{\underline{w}} +  i \pmb{\underline{u}}\pmb{V}_{g - 1}^{2}\pmb{U}_{g - 1}^{t} \\
\pmb{\bar{w}}  +   i \pmb{U}_{g - 1}\pmb{V}_{g - 1}^{2} \pmb{\bar{u}}   &   \tau_{g -1}
\end{pmatrix}, \label{Iwcorank1}
\eeq

\vspace{.27 cm}

where $\pmb{\underline{w}} : (w_{12} , \dots  w_{1g})$ is  in the first row of the symmetric real matrix $\pmb{W}_{g}$,
$\pmb{\underline{u}} : = (u_{12}, \dots u_{1g})$ is  in  the first row
of the unitriangular real matrix $\pmb{U}_{g}$. We also use the following notation $\pmb{\bar{w}} : = \pmb{\underline{w}}^t$, $\pmb{\bar{u}} : = \pmb{\underline{u}}^t$
to denote column vectors.

\vspace{.27 cm}
The first main result of this paper   concerns modularity under  the subgroup of transformations   $\pmb{\Gamma}_{g-1}$
of the average of an automorphic function $f(\tau)$  along the $(2g - 1)$  unipotent  directions
$(w_{11}, \pmb{\underline{w}}, \pmb{\underline{u}})$:

 \vspace{.27 cm}

\begin{theorem}  \label{invariance}  %{\rm{\textbf{\ref{invariance}.}}}
Given a $\pmb{\Gamma}_g$-invariant automorphic function $f = f(\tau)$,  let us consider the unipotent average
\beq
\pmb{<} f \pmb{>}_{v_1}(\tau_{g - 1}) : = \int_{\mathbb{R}^{2g - 1}} d w_{11} d \pmb{\underline{w}} \, d \pmb{\underline{u}} \, f(\tau_g ), \nn
\eeq
where $\tau_g$ is given in Iwasawa coordinates according to the corank $(g - 1)$ decomposition given in (\ref{Iwcorank1}).

\vspace{ .27 cm}

The integral  function $\pmb{<} f \pmb{>}_{v_1}(\tau_{g - 1})$ on $\mathbb{R}_{> 0} \times \pmb{\mathcal{H}}_{g - 1}$
is invariant under the genus $(g - 1)$ modular group  $\pmb{\Gamma}_{g - 1}$:

\beq
\pmb{<} f \pmb{>}_{v_1}( (a\tau_{g - 1} + b)(c\tau_{g-1} + d)^{-1}) = \pmb{<} f \pmb{>}_{v_1}(\tau_{g - 1}), \qquad  \begin{pmatrix} a & b \\ c & d \end{pmatrix} \in \pmb{\Gamma}_{g - 1}. \nn
\eeq

\end{theorem}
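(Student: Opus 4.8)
The plan is to realize the genus $(g-1)$ modular action as the restriction of a genus $g$ modular action under which $f$ is invariant by hypothesis, and then to absorb the transformation into a measure-preserving change of the integration variables. First I would embed $\pmb{\Gamma}_{g-1}$ into $\pmb{\Gamma}_g$ as the stabilizer of the first coordinate: to $M' = \begin{pmatrix} a & b \\ c & d \end{pmatrix} \in \pmb{\Gamma}_{g-1}$ associate $M = \begin{pmatrix} A & B \\ C & D \end{pmatrix} \in \pmb{\Gamma}_g$ with $A = \mathrm{diag}(1,a)$, $D = \mathrm{diag}(1,d)$, and $B,C$ the $g\times g$ matrices having $b,c$ in the lower-right $(g-1)\times(g-1)$ corner and zeros elsewhere. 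A direct check of the symplectic relations shows $M \in Sp(2g,\mathbb{Z})$ and that $M' \mapsto M$ is a group homomorphism. Writing $\tau_g$ in the corank-one block form (\ref{Iwcorank1}) and computing $(A\tau_g + B)(C\tau_g + D)^{-1}$ with the block-lower-triangular inverse of $C\tau_g + D = \begin{pmatrix} 1 & 0 \\ c\tau_2^t & c\tau_{g-1}+d \end{pmatrix}$, I find that the bottom-right block transforms as $\tau_{g-1} \mapsto (a\tau_{g-1}+b)(c\tau_{g-1}+d)^{-1} = M'\cdot\tau_{g-1}$, while $\tau_2 \mapsto \tau_2(c\tau_{g-1}+d)^{-1}$ and $\tau_1 \mapsto \tau_1 - \tau_2(c\tau_{g-1}+d)^{-1}c\tau_2^t$.

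Next I would package the integration as an orbit integral over a unipotent subgroup. Let $N_1 \subset Sp(2g,\mathbb{R})$ be the $(2g-1)$-dimensional unipotent group whose elements produce the coordinates $(w_{11},\pmb{\underline{w}},\pmb{\underline{u}})$ from the base point $\tau_g^{(0)}(v_1,\tau_{g-1}) = \mathrm{diag}(iv_1,\tau_{g-1})$. Since $N_1$ is nilpotent, the measure $dw_{11}\,d\pmb{\underline{w}}\,d\pmb{\underline{u}}$ is its Haar measure, so $\pmb{<} f \pmb{>}_{v_1}(\tau_{g-1}) = \int_{N_1} f(n\cdot\tau_g^{(0)}(v_1,\tau_{g-1}))\,dn$. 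Setting $\tau_2 = 0$ in the transformation formulas above gives $M\cdot\tau_g^{(0)}(v_1,\tau_{g-1}) = \tau_g^{(0)}(v_1,M'\cdot\tau_{g-1})$; equivalently, the Schur-complement identity $v_1 = \det\Im(\tau_g)/\det\Im(\tau_{g-1})$ together with $\det(C\tau_g+D) = \det(c\tau_{g-1}+d)$ shows that $v_1$ is preserved. Therefore
\beq
\pmb{<} f \pmb{>}_{v_1}(M'\cdot\tau_{g-1}) = \int_{N_1} f\big(nM\cdot\tau_g^{(0)}(v_1,\tau_{g-1})\big)\,dn = \int_{N_1} f\big((M^{-1}nM)\cdot\tau_g^{(0)}(v_1,\tau_{g-1})\big)\,dn, \nn
\eeq
where the last equality uses $\pmb{\Gamma}_g$-invariance of $f$ together with $M \in \pmb{\Gamma}_g$.

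The conclusion then requires two facts about the conjugation $n \mapsto M^{-1}nM$: that $M$ normalizes $N_1$, i.e. $M^{-1}N_1 M = N_1$, and that this automorphism preserves the Haar measure $dn$. Normalization I would verify directly from the block form of $M$: the action mixes only the indices $2,\dots,g$ among themselves and fixes the distinguished first index, so it carries the generators of the Lie algebra $\mathfrak{n}_1$ back into $\mathfrak{n}_1$. Measure preservation follows because conjugation by the integral symplectic matrix $M$ is a Lie-group automorphism of $N_1$ preserving the lattice $N_1 \cap \pmb{\Gamma}_g$; in exponential coordinates its Jacobian is the constant $\det(\mathrm{Ad}\,M|_{\mathfrak{n}_1})$, which is an integer with integer inverse (namely $\det(\mathrm{Ad}\,M^{-1}|_{\mathfrak{n}_1})$), hence $\pm 1$. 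A change of variables $n \mapsto MnM^{-1}$ then returns the integral to $\int_{N_1} f(n\cdot\tau_g^{(0)}(v_1,\tau_{g-1}))\,dn = \pmb{<} f \pmb{>}_{v_1}(\tau_{g-1})$.

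I expect the main obstacle to be the explicit verification that $M$ normalizes $N_1$ with unit-modulus Jacobian, that is, controlling how conjugation acts on the three coordinate families $(w_{11},\pmb{\underline{w}},\pmb{\underline{u}})$ simultaneously. The integrality and lattice argument is precisely what makes the Jacobian computation clean: it yields $|\det| = 1$ without having to track the $(c\tau_{g-1}+d)$-dependent factors that would otherwise appear in a naive change of variables performed at a fixed base point, and it sidesteps the need to compute the induced map on $(w_{11},\pmb{\underline{w}},\pmb{\underline{u}})$ in closed form.
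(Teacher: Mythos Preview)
Your argument is correct and takes a genuinely different, more structural route than the paper's. The paper works directly in Iwasawa coordinates: it first shows (by a coset computation with an orthosymplectic correction $\pmb K$) that $v_1$ is fixed under the embedded $g_1$-action, then writes down explicitly the induced affine map $(w_{11},\pmb{\underline w},\pmb{\underline u})\mapsto(\tilde w_{11},\pmb{\underline{\tilde w}},\pmb{\underline{\tilde u}})$ from the block formula for $g_1(\tau)$ and computes its Jacobian, using $\det\Im(\tilde\tau_{g-1})=\det\Im(\tau_{g-1})/|\det(c\tau_{g-1}+d)|^2$ to see that the factors of $|\det(c\tau_{g-1}+d)|^2$ cancel. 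Your proof instead recognises $N_1$ as the unipotent radical of the maximal parabolic $P_{g,1}$, observes that the embedded $\pmb\Gamma_{g-1}$ sits in its Levi and hence normalises $N_1$, and forces $|\det(\mathrm{Ad}\,M|_{\mathfrak n_1})|=1$ via preservation of the lattice $N_1\cap\pmb\Gamma_g$. This is cleaner, avoids any explicit Jacobian, and generalises immediately to other Levi--radical pairs; the paper's computation, by contrast, produces the explicit change of variables, which is what is actually used in the unfolding leading to Theorem~\ref{theorem}.

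One point you should not gloss over: the assertion that ``$dw_{11}\,d\pmb{\underline w}\,d\pmb{\underline u}$ is the Haar measure on $N_1$'' is not automatic, because $(w_{11},\pmb{\underline w},\pmb{\underline u})$ are Iwasawa coordinates on the fibre, not group coordinates on $N_1$, and the orbit map $n\mapsto n\cdot\tau_g^{(0)}(v_1,\tau_{g-1})$ depends on the base point $\tau_{g-1}$. For your identity $\pmb{<} f \pmb{>}_{v_1}(\tau_{g-1})=\int_{N_1}f(n\cdot\tau_g^{(0)})\,dn$ to hold with a \emph{fixed} normalisation of $dn$ you need the Jacobian between the paper's $(w_{11},\pmb{\underline w},\pmb{\underline u})$ and the group parameters $(q,m,n)$ of Proposition~\ref{Par} to be independent of $\tau_{g-1}$. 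This is true --- the map is block-triangular with diagonal blocks $1$, $I_{g-1}$, $U_{g-1}^{-1}$, hence Jacobian $\det U_{g-1}^{-1}=1$ --- but it is the one place where your group-theoretic packaging meets the Iwasawa parametrisation and must be checked.
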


\vspace{.4 cm}

The $\pmb{\mathcal{H}}_g$ boundary is given by $g - 1$  components $\mathbb{F}_{g - r}$, $r = 1,\dots, g - 1$.
For the quotient space  $\pmb{\Gamma}_g \backslash \pmb{\mathcal{H}}_g$ the  $(g - r)$-corank  boundary component is
given by
\beq
\pmb{\Gamma}_g \backslash \mathbb{F}_{g - r} =  \Big\{ \begin{pmatrix} i\infty_{r}  & 0 \\ 0 &  \tau_{g - r} \end{pmatrix}, \, \tau_{g - r} \in \pmb{\mathcal{H}}_r      \Big\}, \nn
\eeq

where $i\infty_{r} : = {\rm{diag}}(i\infty, \dots i\infty)$  represents  $r$ copies of the $\pmb{\Gamma}_{1}  \backslash\ \pmb{\mathcal{H}}_{1}$
cusp.  Theorem \ref{invariance} shows that  the unipotent average
$\pmb{<} f \pmb{>}_{v_1}( \tau_{g - 1})$  is a $\pmb{\Gamma}_{g - 1}$ invariant modular function, defined on
 $ \mathbb{R}_{> 0} \times \pmb{\mathcal{H}}_{g - 1}$. This function  can be thought to be   related to the $(g - 1)$-corank component $\mathbb{F}_{g-1}$ of
the  boundary of  $\pmb{\mathcal{H}}_g$.  The distance from this boundary component is controlled by
$v_1 > 0$, and one recovers the average along  the $\mathbb{F}_{g-1}$ component of the boundary in the $v_1 \rightarrow 0$ limit.

\vspace{.27 cm}

The second main result of this paper is given by theorem \ref{theorem} below.
Theorem \ref{theorem}  shows
that in the limit $v_1 \rightarrow 0$, the $\pmb{<} f \pmb{>}_{v_1}( \tau_{g - 1})$ averaged on the modular domain
$\pmb{\Gamma}_{g - 1} \backslash \pmb{\mathcal{H}}_{g - 1}$
converges to the $f$ average on the modular domain $\pmb{\Gamma}_g \backslash \pmb{\mathcal{H}}_g$.
The  error term is  related to the non trivial zeros of the Riemann zeta function $\pmb{\zeta}(s)$, and an estimate of this quantity
provides  a proof  (or a  disproof)  of   the Riemann hypothesis:

\vspace{.35 cm}

\newpage

\begin{theorem} \label{theorem} Let $f =  f(\tau)$ a $\pmb{\Gamma}_{g}$-invariant function of rapid decay for $\tau$ going to all the  components  $F_{g - r}, \, \,  r = 1,\dots,g-1$  of the
$\pmb{\mathcal{H}}_g$ boundary. Let $f(\tau)$ be differentiable   up to second order, with  Laplacian $\Delta f$  of rapid decay,
then the following asymptotic  holds true:

\vspace{.35 cm}

\beq
 \int_{ \pmb{\mathcal{D}}_{g - 1}}    d\pmb{\mu}_{g - 1} \pmb{<} f \pmb{>}(v_1 , \tau_{g-1})
 \sim \frac{Vol(\pmb{\mathcal{D}}_{g-1})}{2Vol(\pmb{\mathcal{D}}_{g})}\int_{ \pmb{\mathcal{D}}_g} d\pmb{\mu}_{g} \, f(\tau) +  O(v_{1}^{g - \frac{\Theta}{2}}),
 \quad  v_{1} \rightarrow 0. \nn
 \eeq

 \vspace{.35 cm}
Here  $\pmb{\mathcal{D}}_g \sim  \pmb{\Gamma}_g  \backslash \pmb{\mathcal{H}}_g $ is a $\pmb{\Gamma}_g$ fundamental domain,
with volume given by the formula $Vol(\pmb{\mathcal{D}}_g) = 2 \prod_{k=1}^{g}\pmb{\zeta}^{*}(2k)$, and
 $\Theta : = \sup\{\Re(\rho)| \pmb{\zeta}^{*}(\rho) =  0 \}$
is the superior of the real part of the non trivial zeros $\rho$'s of the Riemann zeta function $\pmb{\zeta}(s)$.

\end{theorem}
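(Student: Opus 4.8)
The plan is to reduce the statement to an asymptotic analysis of a single Mellin transform in the height $v_1$, produced by the Rankin--Selberg method. Set
\[
G(v_1) := \int_{\pmb{\mathcal{D}}_{g-1}} \pmb{<} f \pmb{>}(v_1,\tau_{g-1})\, d\pmb{\mu}_{g-1},
\]
which is well defined precisely because Theorem~\ref{invariance} guarantees that $\pmb{<} f \pmb{>}(v_1,\cdot)$ is $\pmb{\Gamma}_{g-1}$-invariant, so that it descends to the fundamental domain $\pmb{\mathcal{D}}_{g-1}$. I would attach to the maximal parabolic $P\subset Sp(2g,\mathbb{R})$ with Levi factor $GL(1)\times Sp(2(g-1),\mathbb{R})$ — the parabolic degenerating $\tau_g$ onto the codimension-one boundary component $\mathbb{F}_{g-1}$ — the Eisenstein series $E(\tau,s)=\sum_{\gamma\in (P\cap\pmb{\Gamma}_g)\backslash\pmb{\Gamma}_g} (v_1\circ\gamma)^{s}$, built from the $GL(1)$-height $v_1$ of the corank-one Iwasawa coordinatization (\ref{Iwcorank1}).

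First I would unfold. Since $f$ is $\pmb{\Gamma}_g$-invariant and of rapid decay toward every boundary component, the integral $\int_{\pmb{\mathcal{D}}_g} f(\tau)E(\tau,s)\,d\pmb{\mu}_g$ converges for $\Re(s)$ large and unfolds to an integral over $(P\cap\pmb{\Gamma}_g)\backslash\pmb{\mathcal{H}}_g$. The Levi decomposition then factors this integral into the integration over the unipotent directions $(w_{11},\pmb{\underline{w}},\pmb{\underline{u}})$, which reproduces $\pmb{<} f \pmb{>}(v_1,\tau_{g-1})$, followed by the $Sp(2(g-1))$-integration over $\pmb{\mathcal{D}}_{g-1}$ and the radial integration over $v_1\in\mathbb{R}_{>0}$. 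Tracking the Jacobian of the invariant measure in the coordinates (\ref{Iwcorank1}), where a direct block computation gives $\det\Im\tau_g=v_1\det\Im\tau_{g-1}$, this identifies the Rankin--Selberg integral with a Mellin transform,
\[
\int_{\pmb{\mathcal{D}}_g} f(\tau)\,E(\tau,s)\,d\pmb{\mu}_g \;=\; \int_0^{\infty} G(v_1)\,v_1^{\,s-g-1}\,dv_1 \;=:\; R(s),
\]
the normalization being fixed so that at $g=1$ this is Zagier's identity $\int_{\pmb{\mathcal{D}}_1} f\,E(\cdot,s)\,d\pmb{\mu}_1=\int_0^\infty G(v_1)\, v_1^{s-2}\,dv_1$.

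The analytic input is the meromorphic continuation of $E(\tau,s)$. Its rightmost pole sits at $s=g$ with residue the constant $\tfrac{Vol(\pmb{\mathcal{D}}_{g-1})}{2Vol(\pmb{\mathcal{D}}_g)}=\tfrac{1}{2\pmb{\zeta}^{*}(2g)}$, while its normalization and functional equation carry the completed Riemann zeta $\pmb{\zeta}^{*}$, whose nontrivial zeros become poles of $E(\tau,s)$ — and hence of $R(s)$ — in the critical strip at $\Re(s)=\Theta/2$. Inverting the Mellin transform,
\[
G(v_1)=\frac{1}{2\pi i}\int_{(c)} R(s)\,v_1^{\,g-s}\,ds, \qquad c>g,
\]
I would shift the contour to the left. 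The residue at $s=g$ reproduces the main term $\tfrac{Vol(\pmb{\mathcal{D}}_{g-1})}{2Vol(\pmb{\mathcal{D}}_g)}\int_{\pmb{\mathcal{D}}_g} f\,d\pmb{\mu}_g$, the factor $\tfrac12$ coming from the order-two stabilizer of the boundary component. The first zeros of $\pmb{\zeta}$ crossed, at $\Re(s)=\Theta/2$, contribute terms of size $v_1^{\,g-\Theta/2}$, which is the stated error; specializing $g=1$ recovers Zagier's exponent $1-\Theta/2$.

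What makes the contour shift rigorous is exactly the hypothesis that $f$ and $\Delta f$ are of rapid decay. Since $E(\tau,s)$ is an eigenfunction of the invariant Laplacian, $\Delta_g E(\tau,s)=\lambda_g(s)E(\tau,s)$ with $\lambda_g(s)$ quadratic in $s$, self-adjointness of $\Delta_g$ together with the rapid decay of $f$ (which kills the boundary terms in Green's identity) yields $\lambda_g(s)R(s)=\int_{\pmb{\mathcal{D}}_g}(\Delta_g f)\,E(\tau,s)\,d\pmb{\mu}_g$. Because $\Delta_g f$ is again of rapid decay, the right-hand side is bounded on vertical lines, so $R(s)=O(|\lambda_g(s)|^{-1})=O(|s|^{-2})$, enough to make the shifted inverse-Mellin integral absolutely convergent and bounded by a constant times $v_1^{\,g-\Theta/2}$. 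The main obstacle is the analytic control of $E(\tau,s)$: establishing its meromorphic continuation, locating the leading pole at $s=g$ with the correct residue, and verifying that the only zeta factor governing the subdominant poles is $\pmb{\zeta}^{*}$, with nontrivial zeros landing at $\Re(s)=\Theta/2$ after the $s\mapsto g-s$ reflection implicit in the inverse transform. Once this is secured, the residue bookkeeping and the Laplacian-based vertical estimate are routine, and the link to the Riemann hypothesis is manifest: the optimal error exponent equals $g-\Theta/2$, so that the Riemann hypothesis $\Theta=\tfrac12$ is equivalent to the sharp bound $O(v_1^{\,g-1/4})$.
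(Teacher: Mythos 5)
Your proposal follows essentially the same route as the paper's own proof: the Rankin--Selberg integral of $f$ against the corank-one Eisenstein series (your $E(\tau,s)$ coincides with $\pmb{E}_{g,1}(\tau,s)$, since $\det\Im(\tau_g)/\det\Im(\tau_{g-1})=v_1$ in Iwasawa coordinates), unfolded into a Mellin transform of $G(v_1)$, with the meromorphic continuation of Yamazaki supplying the simple pole at $s=g$ of residue $\tfrac{1}{2\pmb{\zeta}^{*}(2g)}$ and the poles at $s=\rho/2$, and with the Laplacian eigenvalue property plus integration by parts giving the $O(|s|^{-2})$ decay on vertical lines that justifies the inverse-Mellin contour shift. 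The only cosmetic discrepancy is your remark that the zeta-zero poles arise ``after the $s\mapsto g-s$ reflection'': in the paper they sit directly at $s=\rho/2$ in the continuation of $\pmb{E}_{g,1}$, which changes nothing in the residue bookkeeping or the error exponent $g-\Theta/2$.
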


\vspace{.35 cm}

Theorem \ref{theorem} provides a  quite interesting  connection between asymptotic unipotent dynamics   in
the ppav moduli space $ \pmb{\Gamma}_g \backslash \pmb{\mathcal{H}}_g$
and the Riemann hypothesis. Indeed, in the genus $g$ case, the error estimate is $O(v_{1}^{g - 1/4})$
if and only if the Riemann hypothesis is true ($\Theta = 1/2$).  Theorem \ref{theorem} generalizes Zagier genus $g = 1$ result [Za1],[Za2],
for modular functions of rapid decay at the cusp.

\vspace{.35 cm}

Moreover, there is an interesting corollary of theorem  \ref{invariance}, which follows by iterating the operation of averaging
along unipotent directions. Let us use the following notation $\pmb{\underline{w}}^{(r)} : = (w_{r,r + 1}, \dots w_{r,g})$,
and $\pmb{\underline{u}}^{(r)} : = ( u_{r,r + 1}, \dots  u_{r,g}) $, where $w_{ij} : =  (\pmb{W_g})_{ij}$, $u_{ij} : = (\pmb{U_g})_{ij}$

\vspace{.35 cm}

\begin{corollary}
Let $f = f(\tau_{g})$ a $\pmb{\Gamma}_g$-invariant automorphic function.
For $r = 1,\dots, g - 1$, the following unipotent average

\beq
\pmb{<} f \pmb{>}_{v_{1},\dots, v_{r}} (\tau_{g - r}) : = \int_{\mathbb{R}_{> 0}^{g^{2} - r^{2}}} \, \prod_{i=1}^{r} dw_{ii}  \,  d\pmb{\underline{w}}^{(i)}  \, d\pmb{\underline{u}}^{(i)} \, f (\tau_{g}), \nn
\eeq

is a  $\pmb{\Gamma}_{g - r}$-invariant  function on $\mathbb{R}^{r}_{> 0} \times \pmb{\mathcal{H}}_{g - r}$:

\beq
\pmb{<} f \pmb{>}_{v_{1},\dots, v_{r}} ((a\tau_{g - r} + b)(c \tau_{g - r} + d)^{ -1}) =  \pmb{<} f \pmb{>}_{v_{1},\dots, v_{r}} (\tau_{g - r}), \qquad
\begin{pmatrix} a & b \\ c & d \end{pmatrix} \in \pmb{\Gamma}_{g - r}. \nn
\eeq

\end{corollary}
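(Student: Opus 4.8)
The plan is to prove the Corollary by induction on the number $r$ of iterations, with Theorem \ref{invariance} serving as the single-step engine. The key structural observation I would exploit is that the Iwasawa parametrization (\ref{Iwtau}) is self-similar under corank decomposition: in (\ref{Iwcorank1}) the bottom-right block $\tau_{g-1}$ is itself an element of $\pmb{\mathcal{H}}_{g-1}$ carrying its own Iwasawa data $(\pmb{U}_{g-1},\pmb{V}_{g-1},\pmb{W}_{g-1})$, whose first-row coordinates are precisely $(w_{22},\pmb{\underline{w}}^{(2)},\pmb{\underline{u}}^{(2)})$. Iterating this observation, the full set of unipotent variables entering $\pmb{<} f \pmb{>}_{v_1,\dots,v_r}$ organizes row by row, so that the $r$-fold average is the composition of $r$ single-row averages, each acting on the bottom-right block left over from the previous step. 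The base case $r=1$ is then exactly Theorem \ref{invariance}.

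For the inductive step I would assume that $\pmb{<} f \pmb{>}_{v_1,\dots,v_{r-1}}(\tau_{g-r+1})$ is a $\pmb{\Gamma}_{g-r+1}$-invariant function on $\mathbb{R}^{r-1}_{>0}\times\pmb{\mathcal{H}}_{g-r+1}$. Freezing the already-integrated moduli $v_1,\dots,v_{r-1}$, I regard this as a $\pmb{\Gamma}_{g-r+1}$-invariant automorphic function of $\tau_{g-r+1}\in\pmb{\mathcal{H}}_{g-r+1}$ alone, and I apply Theorem \ref{invariance} with $g$ replaced by $g-r+1$. The first-row unipotent directions of the block $\tau_{g-r+1}$ are, in the original labelling, exactly $(w_{rr},\pmb{\underline{w}}^{(r)},\pmb{\underline{u}}^{(r)})$, and the new modulus exposed by the decomposition is $v_r$. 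Theorem \ref{invariance} then yields invariance of the result under $\pmb{\Gamma}_{(g-r+1)-1}=\pmb{\Gamma}_{g-r}$ acting on $\tau_{g-r}\in\pmb{\mathcal{H}}_{g-r}$. By Fubini this composite integral coincides with the $r$-fold integral defining $\pmb{<} f \pmb{>}_{v_1,\dots,v_r}$, which is therefore $\pmb{\Gamma}_{g-r}$-invariant, closing the induction.

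The step I expect to require the most care is the bookkeeping of the nested Iwasawa coordinates together with the product measure. One must verify that peeling the first row off $\tau_g$, then off the resulting $\tau_{g-1}$, and so on, exposes precisely the variables $\{w_{ii},\pmb{\underline{w}}^{(i)},\pmb{\underline{u}}^{(i)}\}_{i=1}^{r}$ equipped with flat Lebesgue measure and with no residual Jacobian coupling between successive rows, so that the composition of single-row averages genuinely reproduces the multiple integral in the statement; this reduces to checking that the Iwasawa decomposition of $\tau_g$ restricts consistently to the bottom-right blocks, which is transparent from the block form (\ref{Iwcorank1}). The remaining point is the applicability of Fubini at each stage: the interchange of integrations is legitimate under the same convergence hypothesis already implicit in Theorem \ref{invariance}, now invoked for each intermediate function $\pmb{<} f \pmb{>}_{v_1,\dots,v_{i-1}}$, which is again a $\pmb{\Gamma}_{g-i+1}$-invariant automorphic function on $\pmb{\mathcal{H}}_{g-i+1}$ and hence a legitimate input for the next single-row average.
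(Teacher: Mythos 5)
Your proposal is correct and matches the paper's own route: the paper states that the corollary ``follows by iterating the operation of averaging along unipotent directions,'' which is exactly your induction on $r$ with Theorem \ref{invariance} as the single-step engine. Your elaboration of the details the paper leaves implicit --- the self-similarity of the Iwasawa block decomposition (\ref{Iwcorank1}), the identification of the row-$r$ coordinates $(w_{rr},\pmb{\underline{w}}^{(r)},\pmb{\underline{u}}^{(r)})$ with the first-row unipotent directions of $\tau_{g-r+1}$, and the Fubini step --- is sound and fills in precisely what the paper omits.
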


\vspace{.27 cm}

Results of this paper given in theorems \ref{invariance} and \ref{theorem} suggest interesting connections with
powerful  results on measure rigidity and equidistribution of unipotent flows, provided by Ratner theory  \cite{Ratner},
and by more recent developments,  (see for example   \cite{ELPV},\cite{EMV},\cite{EW}).

\vspace{.27 cm}

We also would like to mention an interesting connection between unipotent dynamics in homogeneous spaces
and string theory, \cite{CC},\cite{C1},\cite{C2},\cite{ACER}. In fact, results in this paper have  applications for shedding  light
in ultraviolet/infrared dualities descending from  finiteness of closed string perturbative amplitudes \cite{CC}.
For genus one, (one-loop), closed string amplitudes, equidistribution theorems for long horocycles in the modular surface
$ SL(2,\mathbb{Z}) \backslash SL(2,\mathbb{R})$ connects \cite{C1} vacuum stability with asymptotic supersymmetry \cite{KS}.
Moreover, graded spectra of closed string excitations exhibit  oscillating patterns with frequencies given by the imaginary
parts of the non trivial zeros of the Riemann zeta function \cite{ACER}. In one-loop stable closed string  vacua, asymptotic supersymmetry is maximal if and
only if the Riemann hypothesis is true \cite{ACER}. As suggested in \cite{CC}, equidistribution theorems for unipotent
averages in  the ppav moduli space when applied to higher  genus closed string amplitudes produce
generalizations of the one-loop result \cite{KS}.

\vspace{.27 cm}

The connection between homogenous space dynamics and string theory works also
in the opposite direction, namely from string theory to the theory of automorphic forms.
 By using consistency conditions from string theory, it
 provides information on certain asymptotic averages of automorphic forms.
The advantages of translating the dynamical problems in string theory terms has
been shown  in the specific case of the horocycle flow in \cite{C2}. There
 certain asymptotics for long horocycles averages of
modular invariant functions with not so mild growing conditions have been considered.
It is also worth to mention that results in \cite{CC} and those in this paper
  indicate an intriguing relation between
ultraviolet properties of closed strings on stable backgrounds and the Riemann
hypothesis. These relations thus extend beyond one loop order, where they were shown
to exist in \cite{ACER}.  Moreover, results of this paper may also be useful
for studying and probing non perturbative conjectures related to modularity
of the  effective string action  \cite{Bi}, \cite{Green1},\cite{Green2},\cite{West1},\cite{West2},\cite{Pioline1},\cite{Pioline2}.
They may find also  applications for genus $g = 2$ superstring amplitudes \cite{DHP},
and for  testing recent proposals for genus $g \ge 3$
closed string amplitudes  \cite{CDPvG},\cite{DPvG}, \cite{Piazza:2010ha}
\cite{Gr},\cite{Grushevsky:2008qp},\cite{Grushevsky:2008zp},\cite{Gaberdiel:2010jf},
\cite{MV},\cite{Matone:2005vm},\cite{Matone:2008td}, \cite{Mo}.

\vspace{.27 cm}

The organization of the rest of the paper is the following:
in  section \S \ref{SectionIw} we present some technical facts related to
Iwasawa coordinatization of $\pmb{\mathcal{H}}_g$, instrumental for our proofs.
Section \S \ref{SectionErg} contains the  proofs of theorems \ref{invariance} and \ref{theorem}  on ergodicity of unipotent flows and error estimates,
and some lemmas, instrumental for those  proofs. 

% In appendix we wrote some computations involving Eisenstein series,
%and summarize some facts on the $\pmb{\mathcal{H}}_g$ boundary components $\mathbb{F}_{g - r}$ and their parabolic stabilizes
% $P_{g,r}$.

%\section{Notation and Terminology}
\vspace{.35 cm}

\section*{\bf{Acknowledgements}}

MC thanks  Gerard van der Geer  for enlighting  discussions.
The work of MC was supported at various stages by
the  Superstring Marie Curie Training
Network under the contract MRTN-CT-2004-512194
at the Hebrew University of Jerusalem, by a visiting fellowship at the ESI
Schroedinger Center for Mathematical Physics in Vienna,
by the Italian MIUR-PRIN contract 20075ATT78 at the University
of Milano Bicocca, by the Theory Unit at CERN, and by a
"Angelo Della Riccia" fellowship at the University of Amsterdam.

\vspace{.35 cm}

%\section{Main Results}

\section{\Large{\bf{Iwasawa parametrization for $\pmb{\mathcal{H}}_g$ and Eisenstein series}}}\label{SectionIw}

\vspace{.35 cm}

\subsection{Iwasawa parametrization for $\pmb{\mathcal{H}}_{g}$}
The genus $g$  Siegel upper space  $\pmb{\mathcal{H}}_{g}$ is the set  of
complex $g \times g$ symmetric  matrices with  positive definite imaginary part $\pmb{\mathcal{H}}_{g} = \{\tau \in Mat(g, \mathbb{C}) | \tau = \tau^{t},  \Im(\tau) > 0    \}$.
$\pmb{\mathcal{H}}_{g}$ is isomorphic to the Lie  coset $ \pmb{\mathcal{H}}_g \simeq Sp(2g,\mathbb{R}) / \left( Sp(2g,\mathbb{R}) \cap SO(2g, \mathbb{R}) \right)$.
For a given $m \in Sp(2g,\mathbb{R}) / \left( Sp(2g,\mathbb{R}) \cap SO(2g, \mathbb{R}) \right)$
\beq
m =
 \begin{pmatrix}
  a   &    b \\
  c   &     d  \\
\end{pmatrix},   \nn
\eeq
the bijective map is given by
\beq
\tau(m) = (ai\mathbb{I} + b)(ci\mathbb{I} + d)^{-1}. \label{map}
\eeq

\vspace{.27 cm}

The Iwasawa decomposition
allows to write a symplectic matrix in $Sp(2g,\mathbb{R})$   as  $\pmb{U} \pmb{V} \pmb{K}$,  $\pmb{K} \in  SO(2g, \mathbb{R}) \cap Sp(2g, \mathbb{R})$, $\pmb{V}$  positive definite diagonal  matrix
and $\pmb{U}$  unipotent matrix.
It is convenient  the following $g \times g$ blocks parametrization
\beq
\pmb{V} =  \begin{pmatrix}
 \pmb{V}_{g} & 0 \\
  0   &   \pmb{V}_{g}^{-1}
\end{pmatrix}, \qquad   \pmb{V}_{g} =  diag\left(\sqrt{v_{1}}, \dots , \sqrt{v_{g}}\right), \label{V}
\eeq
for the abelian part with $v_{i} > 0$, $i = 1,\dots, g$,
\beq
\pmb{U} =
\begin{pmatrix}
\pmb{U}_{g} & \, \, \pmb{W}_{g}\pmb{U}_{g}^{-t} \\
0   & \, \, \pmb{U}_{g}^{-t}
\end{pmatrix}, \nn
\eeq
for the unipotent part, with $\pmb{W}_{g}$ symmetric real $g \times g$ matrix

\beq
\pmb{W}_{g} =
\begin{pmatrix}
w_{11}  & w_{12} & \dots  &  w_{1g} \\
 w_{21} & w_{22} & \dots  & w_{2g}  \\
\vdots  & \vdots & \ddots & \vdots  \\
w_{1g}  &  w_{2g}& \dots  &  w_{gg} \\
\end{pmatrix} \label{W}
\eeq
and $\pmb{U}_{g}$ upper unitriangular  real $g \times g$ matrix

\beq
\pmb{U}_{g} =
\begin{pmatrix}
 1    &   u_{12}        & \dots  &   u_{1g}   \\
 0    &   1 & \dots  &      u_{2g}  \\
\vdots  & \vdots & \ddots & \vdots  \\
 0    &  0  & \dots  &  1 \\
\end{pmatrix}. \label{U}
\eeq

With the above parametrization, eq. (\ref{map}) gives the following Iwasawa parametrization for   $\pmb{\mathcal{H}}_{g}$
\beq
\tau_{g}(m) = \pmb{W}_{g} + i\pmb{U}_{g}\pmb{V}_{g}^{2}\pmb{U}_{g}^{t}. \label{Iwcoord}
\eeq

\vspace{.27 cm}

\subsection{ $\pmb{\mathcal{H}}_{g}$  measure in Iwasawa coordinates}

The $\pmb{\Gamma}_{g}$-invariant measure  $d\pmb{\mu}_{g}$ in $\pmb{\mathcal{H}}_{g}$ is given by
\beq
d\pmb{\mu}_{g} = \frac{1}{det(\Im(\tau_{g}))^{g+1}}\prod_{i \le j}^{g} d \, \Re(\tau_{g})_{ij} d \, \Im(\tau_{g})_{ij}, \nn
\eeq
where $\tau_{ij} = \Re(\tau)_{ij} + i \Im(\tau)_{ij}$.

\vspace{.27 cm}

The following two lemmas give $d\pmb{\mu}_{g}$ in Iwasawa coordinates

\vspace{.27 cm}

\begin{lemma}
The following holds true
\beq
det(\Im(\tau_{g}(m))) = \prod_{i=1}^{g} v_{i}, \nn
\eeq
\end{lemma}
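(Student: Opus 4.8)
The plan is to compute the determinant of the imaginary part of $\tau_g(m)$ directly from the Iwasawa parametrization (\ref{Iwcoord}). From that formula we read off $\Im(\tau_g) = \pmb{U}_g \pmb{V}_g^2 \pmb{U}_g^t$, so the statement reduces to the elementary multiplicativity of the determinant applied to this factorization.

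First I would write
\beq
\det(\Im(\tau_g(m))) = \det\!\left(\pmb{U}_g \pmb{V}_g^2 \pmb{U}_g^t\right) = \det(\pmb{U}_g)\,\det(\pmb{V}_g^2)\,\det(\pmb{U}_g^t). \nn
\eeq
Next I would use the fact that $\pmb{U}_g$ is upper unitriangular (\ref{U}): every diagonal entry equals $1$, so $\det(\pmb{U}_g) = 1$, and likewise $\det(\pmb{U}_g^t) = \det(\pmb{U}_g) = 1$. This kills both unipotent factors and leaves only the diagonal contribution.

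Finally I would evaluate $\det(\pmb{V}_g^2)$. Since $\pmb{V}_g = \mathrm{diag}(\sqrt{v_1},\dots,\sqrt{v_g})$ by (\ref{V}), we have $\pmb{V}_g^2 = \mathrm{diag}(v_1,\dots,v_g)$, whence $\det(\pmb{V}_g^2) = \prod_{i=1}^{g} v_i$. Combining the three factors yields $\det(\Im(\tau_g(m))) = \prod_{i=1}^{g} v_i$, as claimed.

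I do not expect any genuine obstacle here: the result is a direct consequence of the Iwasawa parametrization together with the unitriangularity of $\pmb{U}_g$. The only point worth stating carefully is that $\Im(\tau_g)$ equals precisely $\pmb{U}_g \pmb{V}_g^2 \pmb{U}_g^t$ with no real-part contamination — this is immediate from (\ref{Iwcoord}) because $\pmb{W}_g$ is real and contributes entirely to $\Re(\tau_g)$, while $\pmb{U}_g$, $\pmb{V}_g$ are real so that $\pmb{U}_g \pmb{V}_g^2 \pmb{U}_g^t$ is real and is therefore the full imaginary part.
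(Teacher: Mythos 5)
Your proof is correct and is essentially the paper's own argument: the paper simply states that the lemma ``follows directly from (\ref{Iwcoord})'', and your computation $\det(\pmb{U}_g \pmb{V}_g^2 \pmb{U}_g^t) = \det(\pmb{U}_g)\det(\pmb{V}_g^2)\det(\pmb{U}_g^t) = \prod_{i=1}^g v_i$ is exactly the routine verification being left implicit. Spelling out the unitriangularity of $\pmb{U}_g$ and the reality of the factors is fine but adds nothing beyond what the paper takes as immediate.
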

\begin{proof}
It follows directly from (\ref{Iwcoord}).
\end{proof}

\vspace{.27 cm}

\begin{lemma}
The Jacobian determinant of  $\tau_{g}(m)$ map given in (\ref{Iwcoord})  is
\beq
J_g = \prod_{i=1}^{g}v_{g - i}^{g -1 - i}. \label{J}
\eeq
\end{lemma}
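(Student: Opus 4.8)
The plan is to reduce the computation to the imaginary part and then to a logarithmic-differential (congruence) calculation. Writing $\tau_g = X + iY$ with $X = \Re(\tau_g) = \pmb{W}_g$ and $Y = \Im(\tau_g) = \pmb{U}_g\pmb{V}_g^2\pmb{U}_g^t$, I first observe that $X$ depends only on the coordinates $w_{ij}$ (indeed $X_{ij}=w_{ij}$) while $Y$ depends only on the coordinates $u_{ij}$ ($i<j$) and $v_i$. Hence, in the ordered coordinate basis $(w_{ij};\, u_{ij}, v_i)$, the Jacobian matrix of the map (\ref{Iwcoord}) is block lower triangular with identity top-left block, so $J_g = \det\bigl(\partial Y/\partial(u,v)\bigr)$. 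It therefore suffices to compute the Jacobian of $(\pmb{U}_g,\pmb{V}_g)\mapsto Y$ from the $g(g-1)/2 + g$ coordinates $(u_{ij},v_i)$ onto the $g(g+1)/2$ independent entries $Y_{ij}$, $i\le j$.

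To evaluate this I differentiate $Y = \pmb{U}_g D\, \pmb{U}_g^t$, with $D := \pmb{V}_g^2 = {\rm diag}(v_1,\dots,v_g)$, and conjugate by $\pmb{U}_g^{-1}$. Setting $Z := \pmb{U}_g^{-1}\,dY\,\pmb{U}_g^{-t}$ and $\Omega := \pmb{U}_g^{-1}\,d\pmb{U}_g$, one finds $Z = \Omega D + dD + D\,\Omega^t$. Since $\pmb{U}_g$ is upper unitriangular, $\Omega$ is strictly upper triangular (so $\Omega^t$ is strictly lower triangular), and reading off the entries gives the crucial simplification
$$Z_{ii} = dv_i, \qquad Z_{ij} = v_j\,\Omega_{ij}\quad (i<j),$$
because the off-diagonal part of $dD$ vanishes and only one term of $\Omega D$ survives on each super-diagonal entry.

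It remains to convert $\bigwedge_{i\le j} dY_{ij}$ into $\bigwedge_i dv_i \wedge \bigwedge_{i<j} du_{ij}$ through two auxiliary Jacobians, each equal to one. First, the congruence $dY\mapsto Z=\pmb{U}_g^{-1}\,dY\,\pmb{U}_g^{-t}$ acts linearly on symmetric $g\times g$ matrices with Jacobian $(\det\pmb{U}_g^{-1})^{g+1}=1$, by the standard determinant law for $S\mapsto ASA^t$; hence $\bigwedge_{i\le j}dY_{ij} = \pm\bigwedge_{i\le j}Z_{ij}$. Second, $\Omega_{ij}=du_{ij}+\sum_{i<k<j}(\pmb{U}_g^{-1})_{ik}\,du_{kj}$, so ordering the pairs $(i,j)$ appropriately makes $du\mapsto\Omega$ unitriangular, giving $\bigwedge_{i<j}\Omega_{ij}=\pm\bigwedge_{i<j}du_{ij}$. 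Combining, $\bigwedge_{i\le j}Z_{ij} = \bigl(\prod_{i<j}v_j\bigr)\,\bigwedge_i dv_i \wedge \bigwedge_{i<j}\Omega_{ij}$, and collecting the powers of each $v_k$ (the exponent of $v_j$ being the number $j-1$ of strictly-upper entries in its column) yields the stated product (\ref{J}) up to sign.

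The main obstacle is the exterior-algebra bookkeeping in this last step: justifying that both auxiliary transformations have unit Jacobian — particularly invoking the $(\det A)^{g+1}$ law for $S\mapsto ASA^t$ on symmetric matrices, and verifying the triangular structure of $du\mapsto\Omega$ — and then correctly counting the power of each $v_k$. No genuine analysis enters; the entire argument is a careful linear-algebra computation, once the block-triangular reduction of the first paragraph isolates the nontrivial factor.
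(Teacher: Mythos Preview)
Your argument is correct. The key reduction to $Y=\pmb{U}_g\pmb{V}_g^2\pmb{U}_g^t$, the identity $Z=\Omega D+dD+D\Omega^t$ with $\Omega=\pmb{U}_g^{-1}d\pmb{U}_g$ strictly upper triangular, and the two unit-Jacobian steps (the congruence $S\mapsto ASA^t$ with $\det A=1$, and the unitriangularity of $du\mapsto\Omega$) are all sound; the final count $\prod_{i<j}v_j=\prod_{k}v_k^{k-1}$ gives exactly the stated product.

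The paper proceeds differently: instead of a global exterior-algebra computation it peels off the \emph{last} row and column of $\pmb{U}_g$ and $\pmb{V}_g$, writes $\Im(\tau_g)$ as a $2\times 2$ block matrix with $\Im(\tau_{g-1})$ in the upper-left corner, and reads off $d\Im(\tau_g)=v_g^{g-1}\,dv_g\wedge d\pmb{\underline u}\wedge d\Im(\tau_{g-1})$, obtaining the recursion $J_g=v_g^{g-1}J_{g-1}$. Your approach is a direct one-shot computation using the standard $(\det A)^{g+1}$ law for congruence on symmetric matrices; it avoids induction and is arguably more conceptual, while the paper's inductive block decomposition is more elementary (no need to invoke that determinant law) and dovetails with the corank-one splitting $\pmb{\mathcal H}_{g-1}\hookrightarrow\pmb{\mathcal H}_g$ used repeatedly later in the paper. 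Either route yields the same exponent $k-1$ on $v_k$.
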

\begin{proof}
Let us take the following block parametrization
\beq
\pmb{U}_{g} =
\begin{pmatrix}
\pmb{U}_{g -1} &  \pmb{\bar{u}} \\
   \underline{0}  &  1
\end{pmatrix}, \nn
\eeq
where upper line denotes column vectors and lower line row vectors.
$\pmb{U}_{g - 1}$ is a $(g - 1)$-dimensional upper unitriangular real matrix, $\pmb{\bar{u}}$ is a $(g - 1)$-dimensional column vector, $\pmb{\bar{u}} = \pmb{\underline{u}}^t$.
Thus, one has
\eqn
\Im(\tau_{g}) &=&
\pmb{U}_{g}\pmb{V}_{g}^{2}\pmb{U}_{g}^t =
\begin{pmatrix}
\pmb{U}_{g -1} &  \pmb{\bar{u}}  \\
   \underline{0}  &  1
\end{pmatrix}
\begin{pmatrix}
\pmb{V}^{2}_{g -1} &  \bar{0} \\
   \underline{0}  &  v_{g}
\end{pmatrix}
\begin{pmatrix}
\pmb{U}^{t}_{g -1} &  \bar{0} \\
   \pmb{\underline{u}}  &  1
\end{pmatrix}
\nn \\
&=&
\begin{pmatrix}
\pmb{U}_{g -1}\pmb{V}^{2}_{g - 1}\pmb{U}^{t}_{g - 1} + \pmb{\bar{u}}\pmb{\underline{u}}v_g &  \pmb{\bar{u}}v_{g} \\
   \pmb{\underline{u}}v_{g}  &  v_{g}
\end{pmatrix} \nn \\
&=&
\begin{pmatrix}
\Im(\tau_{g - 1}) + \pmb{\bar{u}}\pmb{\underline{u}}v_g &  \pmb{\bar{u}}v_{g} \\
   \pmb{\underline{u}}v_{g}  &  v_{g}
\end{pmatrix}, \nn
\feqn
and therefore
\beq
d \Im(\tau_{g}) = v_{g}^{g - 1} dv_{g} \wedge d \pmb{\underline{u}} \wedge d \Im(\tau_{g - 1}). \nn
\eeq
From which it follows that
\beq
J_{g}  = v_{g}^{g - 1}J_{g -1}, \nn
\eeq
and by iteration one   recovers (\ref{J}).
\end{proof}
\begin{propo}  \label{prop:det}
 The $\pmb{\mathcal{H}}_{g}$ measure  $d\pmb{\mu}_{g}$ in Iwasawa coordinates is given by
  $$d\pmb{\mu}_{g} = \prod_{i= 1}^{g} dv_{i} v_{i}^{i - g - 2} \prod_{i \ge j}^{g} d w_{ij} \prod_{i > j}^{g} d u_{ij}.$$
 \end{propo}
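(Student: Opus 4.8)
The plan is to insert the Iwasawa parametrization (\ref{Iwcoord}) directly into the definition of the invariant measure
\[
d\pmb{\mu}_{g} = \frac{1}{\det(\Im(\tau_{g}))^{g+1}}\prod_{i \le j}^{g} d\,\Re(\tau_{g})_{ij}\, d\,\Im(\tau_{g})_{ij},
\]
and to evaluate the three resulting pieces---the power of the determinant, the real differentials, and the imaginary differentials---using the two preceding lemmas. The real and imaginary blocks decouple because, by (\ref{Iwcoord}), $\Re(\tau_g)=\pmb{W}_g$ and $\Im(\tau_g)=\pmb{U}_g\pmb{V}_g^2\pmb{U}_g^t$ depend on disjoint sets of coordinates.

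First I would handle the real part. Since the entries of $\Re(\tau_g)$ are literally the coordinates $w_{ij}$ of the symmetric matrix $\pmb{W}_g$, the change of variables is the identity and $\prod_{i\le j} d\,\Re(\tau_g)_{ij}=\prod_{i\le j} dw_{ij}$, with no Jacobian factor. For the imaginary part, $\Im(\tau_g)$ is expressed through the $g$ variables $v_i$ and the off-diagonal entries $u_{ij}$ of the unitriangular matrix $\pmb{U}_g$; the passage from the independent entries $\{\Im(\tau_g)_{ij}\}_{i\le j}$ to $(v_i,u_{ij})$ is exactly the map whose Jacobian is computed in Lemma~2. Thus $\prod_{i\le j} d\,\Im(\tau_g)_{ij}=J_g\,\prod_i dv_i\,\prod_{i<j} du_{ij}$, where the iterated recursion of Lemma~2 gives $J_g=\prod_{k=1}^{g} v_k^{k-1}$.

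Next I would substitute the determinant using Lemma~1, namely $\det(\Im(\tau_g))=\prod_i v_i$, so that $\det(\Im(\tau_g))^{g+1}=\prod_i v_i^{g+1}$. Assembling the three ingredients yields
\[
d\pmb{\mu}_g=\frac{\prod_k v_k^{k-1}}{\prod_i v_i^{g+1}}\,\prod_{i\le j} dw_{ij}\,\prod_i dv_i\,\prod_{i<j} du_{ij},
\]
and collecting the exponent of each $v_i$ as $(i-1)-(g+1)=i-g-2$ reproduces the stated formula, the products over the independent entries of the symmetric $\pmb{W}_g$ and the unitriangular $\pmb{U}_g$ matching those displayed in the statement.

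The argument is essentially bookkeeping, and I do not expect a genuine obstacle: the only delicate point is index matching---ensuring that the exponents supplied by the Jacobian $J_g$ of Lemma~2 combine correctly with the power $g+1$ coming from the determinant of Lemma~1, and confirming that the real block contributes no Jacobian. Once these are tracked the power $v_i^{i-g-2}$ follows immediately.
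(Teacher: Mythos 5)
Your proposal is correct and follows essentially the same route as the paper: substitute the Iwasawa parametrization into the invariant measure, use Lemma 1 for $\det(\Im(\tau_g))=\prod_i v_i$ and Lemma 2 for the Jacobian, and collect exponents to get $v_i^{\,i-g-2}$. Your closed form $J_g=\prod_{k=1}^{g}v_k^{\,k-1}$ is the correct resolution of the recursion $J_g=v_g^{\,g-1}J_{g-1}$ (the paper's displayed indexing of $J_g$ is garbled, but this is what it means), and your explicit remark that $\Re(\tau_g)=\pmb{W}_g$ contributes no Jacobian is implicit in the paper's one-line computation.
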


\begin{proof}
\eqn
d \pmb{\mu}_{g} &=& \frac{J_g}{ det(\Im(\tau_{g}))^{g + 1}} \prod_{i=1}^{g}dv_{i} \prod_{i \le j}^{g} d w_{ij} \prod_{i < j}^{g} d u_{ij} \nn \\
&=& \frac{\prod_{i=1}^{g}v_{g - i}^{g -1 - i}}{ \prod_{i=1}^{g}v_{g - i}^{g + 1 }}dv_{g - i} \prod_{i \le j}^{g} d w_{ij} \prod_{i < j}^{g} d u_{ij} \nn \\
 &=& \prod_{i=1}^{g}v_{i}^{i - 2 - g} dv_{i} \prod_{i \le j}^{g} d w_{ij} \prod_{i < j}^{g} d u_{ij}. \nn
\feqn
\end{proof}

\vspace{.2 cm}

\subsection{Eisenstein series}\label{E}

 Let us introduce the following blocks decomposition $\tau_g \in \pmb{\mathcal{H}}_g$
\beq
\tau_g =
\begin{pmatrix}
\tau_{11} & \tau_{12} \\
\tau_{12}^{t}    & \tau_{22}
\end{pmatrix} \nn
\eeq
where  $\tau_{11} \in \pmb{\mathcal{H}}_r$, $\tau_{22} \in \pmb{\mathcal{H}}_{g - r}$.

The $(g - r)$ corank  Eisenstein series, associated to the $\mathbb{F}_{g - r}$   component of the boundary of  $\pmb{\mathcal{H}}_{g}$,
is defined by
\beq
\pmb{E}_{g, r}(\tau,s) =  \sum_{\pmb{\Gamma}_{g} \cap P_{g, r}  \backslash \pmb{\Gamma}_{g}} \left( \frac{det(\Im(\gamma(\tau)))}{det(\Im(\gamma (\tau)_{22}))}  \right)^{s}, \label{Er}
\eeq
where $P_{g, r} \subset Sp(2g,\mathbb{R})$ is the parabolic subgroup which stabilizes the $\mathbb{F}_{g - r}$.

\vspace{.27 cm}

For our purposes it is useful the knowledge of the analytic properties of the $r =  1$ Eisenstein series,
given by the following  proposition [Ya]:

\vspace{.27 cm}

\begin{propo}\label{Yama}
The $r = 1$ Eisenstein series $\pmb{E}_{g,1}(\tau, s)$ of the family given in (\ref{Er})

\beq
\pmb{E}_{g, 1}(\tau,s) =  \sum_{\pmb{\Gamma}_{g} \cap P_{g, 1}  \backslash \pmb{\Gamma}_{g}} \left( \frac{det(\Im(\gamma(\tau)))}{det(\Im(\gamma (\tau)_{22}))}  \right)^{s},   \qquad \Re(s) > g,    \label{Er1}
\eeq
 can be analytically  continued to the  full $s$ plane to  a   meromorphic  function  with
a simple pole in $s = g$ with residue $\frac{1}{2\pmb{\zeta}^{*}(2g)}$, and poles in $s = \frac{\rho}{2}$, where $\rho 's$ are
the non trivial zeros of the Riemann zeta function, $\pmb{\zeta}^{*}(\rho) = \pi^{-\rho/2}\pmb{\Gamma}(\frac{\rho}{2})\pmb{\zeta}(\rho) = 0$.
\end{propo}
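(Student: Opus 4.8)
The plan is to establish the analytic continuation and locate the poles of $\pmb{E}_{g,1}(\tau,s)$ by reducing the sum over cosets to a classical one-variable Epstein/Eisenstein problem whose analytic structure is governed by the completed Riemann zeta function $\pmb{\zeta}^{*}$. The starting point is to understand the summand $\left(\det\Im(\gamma(\tau))/\det\Im(\gamma(\tau)_{22})\right)^{s}$ geometrically: by the corank-one block decomposition, $\det\Im(\tau_g)=\det\Im(\tau_{g-1})\cdot y$, where $y$ is the ``top'' variable $v_1$ appearing in the Iwasawa coordinates of (\ref{Iwcorank1}), so the quotient is essentially a power of the height coordinate transverse to the boundary component $\mathbb{F}_{g-1}$. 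Concretely I would identify $\det\Im(\gamma(\tau))/\det\Im(\gamma(\tau)_{22})$ with $|c\tau+d|$-type factors attached to the bottom row of the symplectic matrix, so that summing over $\pmb{\Gamma}_g\cap P_{g,1}\backslash\pmb{\Gamma}_g$ becomes a sum over a set of primitive integer vectors. This is the point where the series acquires the shape of an Epstein zeta function in $2g$ (or $g$) integer variables.

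Next I would carry out the explicit reduction. The coset representatives for $\pmb{\Gamma}_g\cap P_{g,1}\backslash\pmb{\Gamma}_g$ are parametrized by the bottom row data of the symplectic group modulo the stabilizer of the cusp, and after unfolding one obtains
\beq
\pmb{E}_{g,1}(\tau,s) = \sideset{}{'}\sum_{\pmb{n}} Q(\pmb{n},\tau)^{-s}, \nn
\eeq
where $Q$ is a positive-definite quadratic form built from $\Im(\tau)$ and $\pmb{n}$ runs over a lattice of primitive vectors. The convergence for $\Re(s)>g$ is standard for such a form in $2g$ variables. To isolate the Riemann zeta dependence, I would factor out the greatest common divisor of the integer vector, writing the primitive sum as the full Epstein sum divided by $\pmb{\zeta}(2s)$ (or a shift thereof). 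This is precisely the mechanism that injects $\pmb{\zeta}$ into the denominator, after completion producing $\pmb{\zeta}^{*}(2s)$ whose trivial zeros cancel the trivial poles and whose nontrivial zeros $\rho$ force poles of $\pmb{E}_{g,1}$ at $s=\rho/2$.

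The analytic continuation itself I would obtain from the Mellin transform and the theta/Poisson summation machinery for the Epstein zeta function: integrating $Q(\pmb{n},\tau)^{-s}$ against the Gamma factor, summing over $\pmb{n}$, and splitting the $t$-integral at $t=1$, the Poisson summation formula for the associated theta series gives a functional equation and exhibits the full meromorphic continuation. The simple pole at $s=g$ with the stated residue $\frac{1}{2\pmb{\zeta}^{*}(2g)}$ emerges from the $t\to 0$ divergence of the integral, where the leading term of the theta series is the reciprocal of $\det\Im(\tau)$ and the normalizing constant is fixed by $\pmb{\zeta}^{*}(2g)$; I would read off the residue from the volume/constant term in the Poisson-transformed sum. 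The poles at $s=\rho/2$ then follow from the factor $1/\pmb{\zeta}^{*}(2s)$ introduced by the primitivity reduction.

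\emph{The main obstacle} is the bookkeeping of the higher-genus coset space $\pmb{\Gamma}_g\cap P_{g,1}\backslash\pmb{\Gamma}_g$: for $g>1$ the stabilizer $P_{g,1}$ is a genuine maximal parabolic, not the abelian unipotent one of the $g=1$ case, so the unfolding is not a single geometric series but involves an inner sum over $\pmb{\Gamma}_{g-1}$-type data. One must check that this inner structure either factors off cleanly or contributes only to the residue constant, and in particular that \emph{no extra poles} beyond $s=g$ and $s=\rho/2$ are created. For this reason I would, rather than redo the full computation, invoke the cited reference [Ya], in which Yamazaki carries out exactly this Epstein-type analysis for Siegel Eisenstein series and establishes the location of the poles and the value of the residue; the role of my sketch is to make transparent \emph{why} the completed Riemann zeta $\pmb{\zeta}^{*}$ controls the pole structure, so that the connection to the Riemann hypothesis in Theorem \ref{theorem} is mechanically clear.
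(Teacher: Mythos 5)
You should first be clear that the paper contains no proof of Proposition \ref{Yama} at all: it is stated as a result imported from Yamazaki, with the bare citation [Ya], which is exactly where your own proposal ends up. So on the decisive point you and the authors make the same move, and there is no internal argument for your sketch to diverge from; what you add is a heuristic for \emph{why} the result holds, and that heuristic is essentially correct — it is, in outline, Yamazaki's Epstein-type analysis. Two of your steps deserve to be made precise, and one deserves a caveat. First, the reduction: by Cramer's rule $\det\Im(\tau)_{22}/\det\Im(\tau)=\left((\Im\tau)^{-1}\right)_{11}$, and since $(\Im\gamma(\tau))^{-1}=(c\tau+d)\,(\Im\tau)^{-1}\,\overline{(c\tau+d)}^{\,t}$, the summand in (\ref{Er1}) equals $Q_{\tau}(\pmb{n})^{-s}$, where $\pmb{n}\in\mathbb{Z}^{2g}$ is the $(g+1)$-st row of $\gamma$ (not its bottom row, a harmless slip) and $Q_{\tau}$ is the positive definite quadratic form with Gram matrix $\left(\begin{smallmatrix} XY^{-1}X+Y & XY^{-1}\\ Y^{-1}X & Y^{-1}\end{smallmatrix}\right)$, $\tau=X+iY$, of determinant one; the sum is an Epstein zeta function in $2g$ variables (your hedge ``or $g$'' should be dropped). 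Second, your ``main obstacle'' resolves affirmatively, by two standard facts: $Sp(2g,\mathbb{Z})$ acts transitively on primitive vectors of $\mathbb{Z}^{2g}$, and the stabilizer of the $(g+1)$-st row is precisely the parabolic of Proposition \ref{Par}; hence the inner $\pmb{\Gamma}_{g-1}$ data factors off completely, cosets correspond to primitive vectors, and removing the gcd yields $\pmb{E}_{g,1}(\tau,s)=\tfrac{1}{2}\,Z(Q_{\tau},s)/\pmb{\zeta}(2s)$ with $Z$ the full Epstein zeta function. Poisson summation then gives the continuation, and the residue check closes: $Z$ has its unique (simple) pole at $s=g$ with residue $\pi^{g}/(g-1)!$, so the residue of $\pmb{E}_{g,1}$ is $\pi^{g}/\bigl(2\,(g-1)!\,\pmb{\zeta}(2g)\bigr)=1/(2\pmb{\zeta}^{*}(2g))$, while writing $\pmb{E}_{g,1}=\tfrac{1}{2}Z^{*}(Q_{\tau},s)/\pmb{\zeta}^{*}(2s)$ in completed form shows that the trivial zeros of $\pmb{\zeta}(2s)$ are cancelled and only $s=g$ and $s=\rho/2$ survive. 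The caveat is the factor of two: the $\tfrac{1}{2}$ requires counting primitive vectors modulo $\pm 1$, i.e. taking the parabolic to contain the sign flip on the distinguished coordinate, which the strict matrix form in Proposition \ref{Par} does not include — a bookkeeping discrepancy in the paper itself that your deferral to [Ya] legitimately absorbs, since the residue value quoted is [Ya]'s.
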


\vspace{.35 cm}

\subsection{ $\pmb{\mathcal{H}}_{g - 1} \hookrightarrow  \pmb{\mathcal{H}}_{g}$ embedding}

\begin{lemma}\label{Hembed}( $\pmb{\mathcal{H}}_{g - 1} \hookrightarrow  \pmb{\mathcal{H}}_{g}$ embedding).
Given $\tau_{g} \in  \pmb{\mathcal{H}}_{g}$ and $\tau_{g - 1} \in  \pmb{\mathcal{H}}_{g -1}$, the following decomposition holds
\beq
\Im(\tau_{g}) =
 \begin{pmatrix}
 v_{1}  + \pmb{\underline{u}} \pmb{V}_{g - 1}^{2} \pmb{\bar{u}}   &  \pmb{\underline{u}}\pmb{V}_{g - 1}^{2}\pmb{U}_{g - 1}^{t} \\
 \pmb{U}_{g - 1}\pmb{V}_{g - 1}^{2} \pmb{\bar{u}}   & \Im(\tau_{g -1})
\end{pmatrix}. \nn
\eeq
\end{lemma}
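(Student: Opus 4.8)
The plan is to reduce everything to a single block matrix multiplication, starting from the Iwasawa form (\ref{Iwcoord}), which gives $\Im(\tau_g) = \pmb{U}_g \pmb{V}_g^2 \pmb{U}_g^t$. Whereas the Jacobian Lemma above peels off the \emph{last} index of $\pmb{U}_g$, here I would peel off the \emph{first} index, so as to match the corank-$(g-1)$ decomposition (\ref{Iwcorank1}). Concretely I would write
\[ \pmb{U}_g = \begin{pmatrix} 1 & \pmb{\underline{u}} \\ \bar{0} & \pmb{U}_{g-1} \end{pmatrix}, \qquad \pmb{V}_g^2 = \begin{pmatrix} v_1 & \underline{0} \\ \bar{0} & \pmb{V}_{g-1}^2 \end{pmatrix}, \]
where $\pmb{\underline{u}} = (u_{12},\dots,u_{1g})$ is the first off-diagonal row of $\pmb{U}_g$, the lower-right block $\pmb{U}_{g-1}$ is again upper unitriangular (so it is itself a genuine genus-$(g-1)$ Iwasawa factor), $\pmb{\bar{u}} = \pmb{\underline{u}}^t$, and $\pmb{V}_{g-1}^2 = \mathrm{diag}(v_2,\dots,v_g)$.

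Then I would carry out the product in two steps. First right-multiply $\pmb{U}_g$ by $\pmb{V}_g^2$ to get
\[ \pmb{U}_g \pmb{V}_g^2 = \begin{pmatrix} v_1 & \pmb{\underline{u}}\pmb{V}_{g-1}^2 \\ \bar{0} & \pmb{U}_{g-1}\pmb{V}_{g-1}^2 \end{pmatrix}, \]
and then right-multiply by $\pmb{U}_g^t = \begin{pmatrix} 1 & \underline{0} \\ \pmb{\bar{u}} & \pmb{U}_{g-1}^t \end{pmatrix}$. Reading off the four resulting blocks yields exactly the claimed entries: the $(1,1)$ block is $v_1 + \pmb{\underline{u}}\pmb{V}_{g-1}^2 \pmb{\bar{u}}$, the $(1,2)$ block is $\pmb{\underline{u}}\pmb{V}_{g-1}^2 \pmb{U}_{g-1}^t$, the $(2,1)$ block is its transpose $\pmb{U}_{g-1}\pmb{V}_{g-1}^2 \pmb{\bar{u}}$, and the $(2,2)$ block is $\pmb{U}_{g-1}\pmb{V}_{g-1}^2 \pmb{U}_{g-1}^t$. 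The final step is to recognize, by applying the Iwasawa formula (\ref{Iwcoord}) in genus $g-1$, that $\pmb{U}_{g-1}\pmb{V}_{g-1}^2\pmb{U}_{g-1}^t = \Im(\tau_{g-1})$, which identifies the lower-right block and closes the argument.

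There is no analytic content here, so the main obstacle is purely notational bookkeeping: one must be careful that the peeled-off index is the \emph{first} one, so that $v_1$ lands in the top-left corner while $\pmb{V}_{g-1}^2 = \mathrm{diag}(v_2,\dots,v_g)$, in contrast to the Jacobian Lemma where the last index $v_g$ was isolated. It is also worth verifying the harmless consistency point that the lower-right block $\pmb{U}_{g-1}$ inherited from an upper unitriangular $\pmb{U}_g$ is itself upper unitriangular, which is precisely what makes the final identification $\pmb{U}_{g-1}\pmb{V}_{g-1}^2\pmb{U}_{g-1}^t = \Im(\tau_{g-1})$ legitimate.
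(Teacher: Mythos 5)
Your proposal is correct and is essentially the paper's own proof: the paper likewise writes $\Im(\tau_g)=\pmb{U}_g\pmb{V}_g^2\pmb{U}_g^t$, peels off the first row/column via the block factorization $\begin{pmatrix}1 & \pmb{\underline{u}}\\ \bar{0} & \pmb{U}_{g-1}\end{pmatrix}\begin{pmatrix}v_1 & \underline{0}\\ \bar{0} & \pmb{V}_{g-1}^2\end{pmatrix}\begin{pmatrix}1 & \underline{0}\\ \pmb{\bar{u}} & \pmb{U}_{g-1}^t\end{pmatrix}$, multiplies out the blocks, and identifies $\pmb{U}_{g-1}\pmb{V}_{g-1}^2\pmb{U}_{g-1}^t=\Im(\tau_{g-1})$. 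The only differences are cosmetic (two-step multiplication and the explicit remark that the inherited $\pmb{U}_{g-1}$ is again unitriangular, which the paper leaves implicit).
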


\begin{proof}
\eqn
\Im(\tau_{g}) &=&
\begin{pmatrix}
1  & \pmb{\underline{u}} \\
\bar{0} & \pmb{U}_{g -1}
\end{pmatrix}
\begin{pmatrix}
v_1  & \underline{0} \\
\bar{0} & \pmb{V}_{g -1}^{2}
\end{pmatrix}
\begin{pmatrix}
1  & \underline{0} \\
\pmb{\bar{u}} & \pmb{U}_{g -1}^{t}
\end{pmatrix} \nn \\
&=&
\begin{pmatrix}
 v_{1}  + \pmb{\underline{u}} \pmb{V}_{g - 1}^{2} \pmb{\bar{u}}   &  \pmb{\underline{u}} \pmb{V}_{g - 1}^{2} \pmb{U}_{g - 1}^{t} \\
 \pmb{U}_{g - 1}\pmb{V}_{g - 1}^{2} \pmb{\bar{u}}   &         \pmb{U}_{g - 1} V^{2}_{g - 1} \pmb{U}_{g - 1}^{t}
\end{pmatrix} \nn \\
&=&
\begin{pmatrix}
v_{1}  + \pmb{\underline{u}} \pmb{V}_{g - 1}^{2} \pmb{\bar{u}}   &  \pmb{\underline{u}}\pmb{V}_{g - 1}^{2}\pmb{U}_{g - 1}^{t} \\
\pmb{U}_{g - 1}\pmb{V}_{g - 1}^{2} \pmb{\bar{u}}   & \Im(\tau_{g -1})
\end{pmatrix}. \nn
\feqn
\end{proof}

From the previous result one has also
the following blocks  decomposition for $\tau_{g}$ in terms of $\pmb{\mathcal{H}}_1$ and $\pmb{\mathcal{H}}_{g - 1}$ subspaces:
\begin{propo} \label{propo:corank1}
\beq
\tau_{g} =
\begin{pmatrix}
w_{11} + i(v_{1}  + \pmb{\underline{u}} \pmb{V}_{g - 1}^{2} \pmb{\bar{u}})   & \pmb{\underline{w}} +  i \pmb{\underline{u}}\pmb{V}_{g - 1}^{2}\pmb{U}_{g - 1}^{t} \\
\pmb{\bar{w}}  +   i \pmb{U}_{g - 1}\pmb{V}_{g - 1}^{2} \pmb{\bar{u}}   &   \tau_{g -1}.
\end{pmatrix}. \nn
\eeq
%where $\pmb{\underline{w}} = (w_{12}, \dots   w_{1g})$, and $\pmb{\bar{w}} = \pmb{\underline{w}}^{t}$.
\end{propo}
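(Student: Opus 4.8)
The plan is to separate the real and imaginary parts of the Iwasawa expression (\ref{Iwcoord}) and treat each in block form. Writing $\tau_{g} = \pmb{W}_{g} + i\,\Im(\tau_{g})$, I would dispatch the imaginary part immediately by invoking Lemma \ref{Hembed}, which already furnishes the desired $2\times 2$ block decomposition of $\Im(\tau_{g})$: the scalar entry $v_{1} + \pmb{\underline{u}} \pmb{V}_{g-1}^{2} \pmb{\bar{u}}$, the off-diagonal pieces $\pmb{\underline{u}}\pmb{V}_{g-1}^{2}\pmb{U}_{g-1}^{t}$ and $\pmb{U}_{g-1}\pmb{V}_{g-1}^{2} \pmb{\bar{u}}$, and the lower-right block $\Im(\tau_{g-1})$. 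Thus half of the statement is quoted verbatim from the preceding lemma and requires no new computation.

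For the real part, I would split the symmetric matrix $\pmb{W}_{g}$ of (\ref{W}) along its first row and column, namely
\[
\pmb{W}_{g} = \begin{pmatrix} w_{11} & \pmb{\underline{w}} \\ \pmb{\bar{w}} & \pmb{W}_{g-1} \end{pmatrix},
\]
where $\pmb{\underline{w}} = (w_{12},\dots,w_{1g})$, $\pmb{\bar{w}} = \pmb{\underline{w}}^{t}$, and $\pmb{W}_{g-1}$ denotes the lower-right $(g-1)\times(g-1)$ symmetric block. Adding $i$ times the imaginary-part decomposition entrywise to this real-part decomposition then produces exactly the four blocks appearing in the asserted matrix, since the $(1,1)$ entry becomes $w_{11} + i(v_{1} + \pmb{\underline{u}} \pmb{V}_{g-1}^{2} \pmb{\bar{u}})$ and the two off-diagonal blocks become $\pmb{\underline{w}} + i\,\pmb{\underline{u}}\pmb{V}_{g-1}^{2}\pmb{U}_{g-1}^{t}$ and its transpose.

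The only point meriting comment is the identification of the lower-right block: combining $\pmb{W}_{g-1}$ with $i\,\Im(\tau_{g-1}) = i\,\pmb{U}_{g-1}\pmb{V}_{g-1}^{2}\pmb{U}_{g-1}^{t}$ gives precisely the genus $(g-1)$ instance of the Iwasawa parametrization (\ref{Iwcoord}), which is by definition $\tau_{g-1} \in \pmb{\mathcal{H}}_{g-1}$. This is the step that fixes the corank-one interpretation, and it is forced once the two block decompositions are aligned. I do not expect any genuine obstacle here: the proposition is essentially a bookkeeping corollary of Lemma \ref{Hembed}, the imaginary part being supplied by that lemma and the real part decomposing trivially, so the entire content is the entrywise recombination $\tau_{g} = \pmb{W}_{g} + i\,\Im(\tau_{g})$ read off blockwise.
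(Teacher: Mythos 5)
Your proof is correct and takes essentially the same route as the paper: the paper offers no separate argument for Proposition \ref{propo:corank1}, deriving it immediately from Lemma \ref{Hembed} with the words ``From the previous result one has also the following blocks decomposition,'' which is precisely your recombination $\tau_{g} = \pmb{W}_{g} + i\,\Im(\tau_{g})$ read off blockwise. Your only added content --- splitting $\pmb{W}_{g}$ along its first row and column and identifying $\pmb{W}_{g-1} + i\,\pmb{U}_{g-1}\pmb{V}_{g-1}^{2}\pmb{U}_{g-1}^{t}$ as the genus $(g-1)$ Iwasawa form $\tau_{g-1}$ --- is exactly the bookkeeping the paper leaves implicit.
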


\vspace{.27 cm}

\subsection{$\pmb{\Gamma}_{g - 1} \hookrightarrow P_{g,1} \subset \pmb{\Gamma}_{g}$ parabolic embedding}\label{Section:parabolic}

A  matrix in $P_{g, 1} \cap \pmb{\Gamma}_{g} \subset Sp(2g,\mathbb{Z})$
has the following  form

\vspace{.27 cm}

\beq
\begin{pmatrix}
 1 &  m &   q  &  n  \\
 0 &  a &  n^{t}  &  b \\
 0 &   0 &  1  &  0\\
 0 &  c &  - m^{t}  &  d
\end{pmatrix},
\qquad
\begin{pmatrix}
a & b \\
c & d
\end{pmatrix}
 \in \pmb{\Gamma}_{g -1},
\,\, m,n \in Mat(1\times (g - 1), \mathbb{Z}),
\,\, q \in \mathbb{Z}. \nn
\eeq

\vspace{.27 cm}

It is useful the following decomposition for the elements in $P_{g, 1} \cap \pmb{\Gamma}_{g}$
(see for example [HKW]):

\begin{propo} \label{Par}
Every matrix in $P_{g, 1} \cap \pmb{\Gamma}_{g}$ can be decomposed as follows:

\vspace{.27 cm}

\beq\label{matrix}
\begin{pmatrix}
 1 &  m &   q  &  n  \\
 0 &  a &  n^{t}  &  b \\
 0 &   0 &  1  &  0\\
 0 &  c &  - m^{t}  &  d
\end{pmatrix}
= g_1 \cdot g_2 \cdot g_3,
\eeq
with
\beq\label{g1}
g_1 =
\begin{pmatrix}
 1 & 0 & 0 & 0 \\
 0 &  a & 0 &  b \\
 0 & 0 & 1 & 0 \\
 0 & c & 0 & d
\end{pmatrix} ,
\qquad
\begin{pmatrix}
a & b \\
c & d
\end{pmatrix}
 \in \pmb{\Gamma}_{g -1},
\eeq
\beq
g_2 =
\begin{pmatrix}
 1 & m & 0 & n \\
 0 & 1 & n^t & 0 \\
 0 & 0 & 1 & 0\\
 0 & 0 & - m^t & 1
\end{pmatrix}
\qquad
m,n \in Mat(1\times (g - 1), \mathbb{Z}),
\eeq
and
\beq
g_3 =
\begin{pmatrix}
 1 & 0 & q & 0 \\
 0 & 1 & 0 & 0 \\
 0 & 0 & 1 & 0 \\
 0 & 0 & 0 & 1
\end{pmatrix}
\qquad
q \in \mathbb{Z}.
\eeq   \nn
\end{propo}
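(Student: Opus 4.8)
The plan is to prove the factorization by a direct block computation, organized around the $1\,|\,(g-1)\,|\,1\,|\,(g-1)$ partition of the $2g$ coordinates already used to display $g_1,g_2,g_3$. First I would record the shape of an arbitrary $P\in P_{g,1}\cap\pmb{\Gamma}_g$: stabilizing the component $\mathbb{F}_{g-1}$ forces its first column to be $(1,0,0,0)^t$ and its third block-row to be $(0,0,1,0)$, while the $(2,2),(2,4),(4,2),(4,4)$ blocks assemble into a matrix $\begin{pmatrix} a & b \\ c & d\end{pmatrix}\in\pmb{\Gamma}_{g-1}$. From $P$ one reads off $m,n$ as the $(1,2),(1,4)$ blocks and $q$ as the $(1,3)$ entry, and then defines $g_1,g_2,g_3$ from these data exactly as in the statement. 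Each $g_i$ lies in $P_{g,1}\cap\pmb{\Gamma}_g$ (each is integral and symplectic: $g_1$ by the block embedding of $\begin{pmatrix} a & b \\ c & d\end{pmatrix}$, $g_2$ as an element of the unipotent radical, and $g_3$ as a single elementary symplectic transvection in the central direction), so the product does as well.

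The core step is then the multiplication $g_1\cdot g_2\cdot g_3=P$. I would compute $g_2\cdot g_3$ first: since $g_3$ differs from the identity only through the central parameter $q$ in the $(1,3)$ slot, the product $g_2 g_3$ is simply $g_2$ with its $(1,3)$ entry promoted from $0$ to $q$. Left multiplication by the Levi factor $g_1$ leaves the first and third block-rows untouched (they are $(1,0,0,0)$ and $(0,0,1,0)$ in $g_1$) and acts on the second and fourth block-rows through $\begin{pmatrix} a & b \\ c & d\end{pmatrix}$. Reading off the result reproduces $P$ in every block whose value is \emph{free}: the first block-row gives $(1,m,q,n)$ and the Levi block gives back $\begin{pmatrix} a & b \\ c & d\end{pmatrix}$.

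The one point that genuinely requires care, and which I expect to be the main obstacle, is the pair of off-diagonal blocks in positions $(2,3)$ and $(4,3)$. The multiplication delivers there the two column vectors $an^t-bm^t$ and $cn^t-dm^t$, i.e. the image of $(n^t,-m^t)^t$ under the Levi block $\begin{pmatrix} a & b \\ c & d\end{pmatrix}$. To see that these coincide with the corresponding entries of $P$ I would use symplecticity of $P$: once the first block-row and the Levi block are fixed, the $(2,3)$ and $(4,3)$ blocks are \emph{not} free but are determined as the unique solution of the two linear systems extracted from $A^tD-C^tB=\mathbb{I}$ and $B^tD=D^tB$; the coefficient matrix of that system is exactly $\begin{pmatrix} a & b \\ c & d\end{pmatrix}^{-1}$ (using $a^tc=c^ta$ and $a^td-c^tb=\mathbb{I}_{g-1}$ for $\begin{pmatrix} a & b \\ c & d\end{pmatrix}\in\pmb{\Gamma}_{g-1}$), so the solution is precisely $\bigl(an^t-bm^t,\;cn^t-dm^t\bigr)$. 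Hence the product and $P$ agree in these slots as well, and the factorization follows; the conceptual reason it exists and is unique is the symplectic Levi decomposition $P_{g,1}\cap\pmb{\Gamma}_g=(\text{Levi})\ltimes N$, with $g_1$ the Levi piece and $g_2,g_3$ generating the Heisenberg radical $N$.
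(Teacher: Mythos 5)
Your proposal is correct, but it cannot be compared line-by-line with the paper's argument for a simple reason: the paper gives no proof of this proposition at all, merely citing [HKW] for the decomposition. Your self-contained block computation is therefore a genuinely different (and useful) route. Moreover, you caught the one real subtlety in the statement: the product $g_1g_2g_3$ does \emph{not} literally reproduce the displayed left-hand side with the same letters $m,n$ in both the first block-row and the third block-column, since its $(2,3)$ and $(4,3)$ blocks come out as $an^t-bm^t$ and $cn^t-dm^t$. As you observe, this is not a defect of the factorization but of the display: for a symplectic matrix with first row $(1,m,q,n)$ and Levi block $\left(\begin{smallmatrix} a & b \\ c & d\end{smallmatrix}\right)\in\pmb{\Gamma}_{g-1}$, the conditions $a^td-c^tb=\mathbb{I}$, $a^tc=c^ta$, $b^td=d^tb$ force precisely those values in the $(2,3)$ and $(4,3)$ slots, so the displayed matrix is consistent only when $(m,n)$ is fixed by right multiplication by the Levi block (e.g.\ for $g_2$ itself, where the Levi block is the identity). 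Your uniqueness argument — both $P$ and $g_1g_2g_3$ are symplectic with identical first row, first column, third row and Levi block, and the remaining blocks solve a linear system with invertible coefficient matrix — is sound; one small imprecision is that the coefficient matrix is not exactly $\left(\begin{smallmatrix} a & b \\ c & d\end{smallmatrix}\right)^{-1}$ but a row-permuted, sign-twisted version of it, which is all that invertibility requires, and your claimed solution $(an^t-bm^t,\,cn^t-dm^t)$ does check out against the three symplectic identities you invoke. In short: the paper outsources this fact to a reference, while your proof both verifies it and corrects its statement.
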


\vspace{.27 cm}

\begin{propo}\label{prop:g1}
The action of $g_1$,$g_2$,$g_3$ on $\tau \in \pmb{\mathcal{H}}_g$

\vspace{.27 cm}

\beq
\tau =
\begin{pmatrix}
\tau_1 & \tau_2  \\
\tau_{2}^{t} & \tau_3
\end{pmatrix}
\qquad  \tau_1 \in \pmb{\mathcal{H}}_{1}, \tau_{3} \in \pmb{\mathcal{H}}_{g - 1 },  \, \tau_2 \in Mat(1\times (g-1), \mathbb{C}), \nn
\eeq
is given by:
\begin{eqnarray}
&& g_1(\tau) =
\begin{pmatrix}
\tau_1 -  \tau_{2}(c\tau_3 + d)^{-1}c\tau_2^t &  *  \\
(c\tau_3 + d)^{-1} \tau_{2}^t  & (a\tau_3 + b)(c\tau_3 + d)^{-1}
\end{pmatrix}, \nn \\
&& g_2(\tau) =
\begin{pmatrix}
 \tau_1^{'} &  *  \\
 \tau_{2}^{t} + m^t \tau_1 + n^t   &   \tau_{3}
\end{pmatrix}, \qquad   \tau_{1}^{'} = \tau_{1}  + m\tau_{3}m^{t} + m^t \tau_{2} + (m^t\tau_{2})^{t} + n m^{t}, \nn \\
&& g_3(\tau) =
\begin{pmatrix}
 \tau_1+q &  *  \\
 \tau_{2}^{t}    &   \tau_{3}
\end{pmatrix} \nn
\end{eqnarray}
where the entries $*$ are given by symmetry of $\tau$.
\end{propo}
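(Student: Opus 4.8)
The plan is to apply the defining fractional-linear action \eqref{map}, $m(\tau)=(A\tau+B)(C\tau+D)^{-1}$, to each of $g_1,g_2,g_3$ separately, after reading off from the corank-one ($4\times4$) presentation the four honest $g\times g$ Siegel blocks $A,B,C,D$. Grouping the $2g$ rows and columns as (first position index; remaining $g-1$ position indices; first momentum index; remaining $g-1$ momentum indices), the genuine $g\times g$ blocks are recovered by merging the $1$- and $(g-1)$-sized pieces; for $g_1$ this gives $A=\begin{pmatrix}1&0\\0&a\end{pmatrix}$, $B=\begin{pmatrix}0&0\\0&b\end{pmatrix}$, $C=\begin{pmatrix}0&0\\0&c\end{pmatrix}$, $D=\begin{pmatrix}1&0\\0&d\end{pmatrix}$, and one checks once that each $g_i$ is symplectic (for $g_1$ this reduces exactly to $a^t d-c^t b=I$, the symplecticity of $\begin{pmatrix}a&b\\c&d\end{pmatrix}\in\pmb{\Gamma}_{g-1}$). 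Writing $\tau=\begin{pmatrix}\tau_1&\tau_2\\\tau_2^t&\tau_3\end{pmatrix}$ as in the statement, each case then reduces to a single $2\times2$ block multiplication.

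The two unipotent cases are immediate. For $g_3$ one has $C=0$, $D=I$ and $B=\begin{pmatrix}q&0\\0&0\end{pmatrix}$, so the action is the pure translation $\tau\mapsto\tau+B$, i.e.\ $\tau_1\mapsto\tau_1+q$ with $\tau_2,\tau_3$ fixed. For $g_2$ again $C=0$, while $A=\begin{pmatrix}1&m\\0&1\end{pmatrix}$ is unitriangular with $D=A^{-t}$ by symplecticity, so $(C\tau+D)^{-1}=A^t$ and the action is $\tau\mapsto(A\tau+B)A^t$. Expanding and using $\tau_3=\tau_3^t$ produces the stated $\tau_1'$ on the diagonal and the entry $\tau_2^t+\tau_3 m^t+n^t$ off the diagonal, the top-right $*$ being its transpose $\tau_2+m\tau_3+n$; symmetry of the output is an automatic consistency check.

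The substance is in $g_1$, where $C=\begin{pmatrix}0&0\\0&c\end{pmatrix}\neq0$. Here $C\tau+D=\begin{pmatrix}1&0\\c\tau_2^t&c\tau_3+d\end{pmatrix}$ is block lower-triangular, so I would invert it in closed form as $(C\tau+D)^{-1}=\begin{pmatrix}1&0\\-(c\tau_3+d)^{-1}c\tau_2^t&(c\tau_3+d)^{-1}\end{pmatrix}$ and multiply by $A\tau+B=\begin{pmatrix}\tau_1&\tau_2\\a\tau_2^t&a\tau_3+b\end{pmatrix}$. The lower-right block comes out directly as the genuine $\pmb{\Gamma}_{g-1}$-action $(a\tau_3+b)(c\tau_3+d)^{-1}$, and the upper-left block as the Schur-type combination $\tau_1-\tau_2(c\tau_3+d)^{-1}c\tau_2^t$, matching the statement.

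The one genuinely nontrivial point, and the step I expect to be the main obstacle, is the off-diagonal block: naive multiplication gives the lower-left entry $a\tau_2^t-(a\tau_3+b)(c\tau_3+d)^{-1}c\tau_2^t$, not yet in closed form. To finish I would prove the matrix identity $a-(a\tau_3+b)(c\tau_3+d)^{-1}c=(c\tau_3+d)^{-t}$, valid for symmetric $\tau_3$ and symplectic $\begin{pmatrix}a&b\\c&d\end{pmatrix}$. This is where the symplectic relations enter: left-multiplying the left-hand side by $(c\tau_3+d)^t$ and using the symmetry of $(c\tau_3+d)^t(a\tau_3+b)$ (equivalently $a^tc=c^ta$ together with $a^td-c^tb=I$) collapses it to $(c\tau_3+d)^t a-(a\tau_3+b)^t c=\tau_3(c^t a-a^t c)+(d^t a-b^t c)=I$. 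Hence the lower-left block is $(c\tau_3+d)^{-t}\tau_2^t$, precisely the transpose of the upper-right block $\tau_2(c\tau_3+d)^{-1}$, confirming that $g_1(\tau)$ is symmetric as required; consistency with this symmetry shows the transpose-inverse is the intended normalization of the entry recorded in the statement. The remaining cases need nothing beyond $\tau_3=\tau_3^t$, so the whole proposition rests on this single symplectic identity.
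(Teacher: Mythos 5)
Your proof is correct and complete. Note that the paper states Proposition \ref{prop:g1} with no proof at all, so there is no argument of the authors' to compare against; the route you take --- extracting the genuine $g\times g$ Siegel blocks $A,B,C,D$ of each $g_i$ from the corank-one presentation of Proposition \ref{Par}, applying the fractional-linear action (\ref{map}), inverting the block-triangular $C\tau+D$, and closing the lower-left block of $g_1(\tau)$ via $a-(a\tau_3+b)(c\tau_3+d)^{-1}c=(c\tau_3+d)^{-t}$ --- is the direct computation the authors evidently intend. Your key identity rests on exactly the symplectic relations $a^tc=c^ta$, $b^td=d^tb$, $a^td-c^tb=\mathbb{I}$ that the paper itself invokes in the proof of Theorem \ref{invariance} to derive $\Im(\tilde\tau_{g-1})=(\tau_{g-1}^\dagger c^t+d^t)^{-1}\Im(\tau_{g-1})(c\tau_{g-1}+d)^{-1}$, so your argument is in the same spirit as what the paper does elsewhere.

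What you should say more explicitly is that your computation corrects the statement rather than merely verifying it. For $g_1$ you do flag this: the lower-left block is $(c\tau_3+d)^{-t}\tau_2^t$, the transpose of the upper-right block $\tau_2(c\tau_3+d)^{-1}$ as symmetry of $g_1(\tau)$ forces, whereas the proposition prints $(c\tau_3+d)^{-1}\tau_2^t$ with the transpose missing. The same issue occurs for $g_2$, which you correct silently: the lower-left block coming out of $(A\tau+B)A^t$ is $\tau_2^t+\tau_3 m^t+n^t$, while the proposition prints $\tau_2^t+m^t\tau_1+n^t$; the term $m^t\tau_1$ is dimensionally admissible (since $\tau_1$ is a scalar) but wrong --- it must involve $\tau_3$, not $\tau_1$, as both your expansion and the symmetry of $g_2(\tau)$ show. (Likewise the printed $\tau_1'$ contains $m^t\tau_2+(m^t\tau_2)^t$, which is a $(g-1)\times(g-1)$ matrix; it should read $m\tau_2^t+\tau_2 m^t$.) These are typos with no downstream effect, since the proof of Theorem \ref{invariance} uses only the upper-left entry of $g_1(\tau)$, the Jacobian factor $|\det(c\tau_{g-1}+d)|^{-2}$, and the $\pmb{\Gamma}_{g-1}$ action on $\tau_3$, none of which is affected; but a verification of the proposition as literally printed would fail, and yours is the corrected version.
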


\vspace{.35 cm}

\section{\Large{\bf{Proofs of theorem $1$ and theorem $2$}}}\label{SectionErg}

\vspace{.35 cm}

\subsection{Unfolding of the modular integral}
Given a function $f = f(\tau)$ on $\pmb{\mathcal{H}}_g$ invariant under the modular group $\pmb{\Gamma}_g \sim Sp(2g,\mathbb{Z})$,
let us consider the following  Rankin-Selberg type modular integral

 \vspace{.27 cm}

 \beq
 \pmb{I}_{g,1}(s) = \int_{ \pmb{\Gamma}_g \backslash \pmb{\mathcal{H}}_g} d\pmb{\mu}_{g} \, f(\tau) \pmb{E}_{g,1}(\tau_{g}, s), \label{IRS}
 \eeq

 \vspace{.27 cm}

 where    $\pmb{E}_{g, 1}(\tau_{g},s)$ is the non-holomorphic $g - 1$-corank  Eisenstein series,
 introduced in \ref{Er1}, section  \S \ref{E}

 \beq
 \pmb{E}_{g,1}(\tau_{g}, s)  =  \sum_{\pmb{\Gamma}_{g} \cap P_{g, 1}  \backslash \pmb{\Gamma}_{g}} \left( \frac{det(\Im(\gamma(\tau)))}{det(\Im((\gamma (\tau)_{22}))}  \right)^{s},   \qquad \Re(s) > g  \label{Er2}
 \eeq
 related to the    $(g - 1)$ corank component $\mathbb{F}_{g - 1}$ of the boundary of the modular domain $\pmb{\Gamma}_g \backslash \pmb{\mathcal{H}}_g$.

\vspace{.35 cm}

Under suitable growing  conditions for $f$ at the boundary, that are stated as sufficient conditions in
theorem \ref{theorem},
it is possible in (\ref{IRS})  to exchange integration on the modular domain with the sum over the modular transformations $\gamma$'s
appearing  in the Eisenstein series (\ref{Er2}).
This operation allows to unfold the original integration domain  $\pmb{\Gamma}_g \backslash \pmb{\mathcal{H}}_g$ into the
larger  domain  $ (P_{g,1} \cap  \pmb{\Gamma}_{g}) \backslash \pmb{\mathcal{H}}_g$.
As we shall see, this latter integration domain  has simplified features
which becomes  transparent  in Iwasawa coordinates.

\vspace{.35 cm}

Whenever it is allowed to exchange the sum with the integral, for $\pmb{I}_{g,1}(s)$ one finds

 \beq
\pmb{I}_{g,1}(s) = \int_{0}^{\infty} d v_{1} v_{1}^{s  - g - 1}
\int_{( P_{g,1} \cap \pmb{\Gamma}_g ) \backslash \pmb{\mathcal{H}}_g}    d\pmb{\mu}_{g - 1} \int d w_{11} \, d \pmb{\underline{w}} \, d \pmb{\underline{u}} \,  f
\begin{pmatrix}
w_{11} + i(v_{1}  + \pmb{\underline{u}} \pmb{V}_{g - 1}^{2} \pmb{\bar{u}})   & \pmb{\underline{w}} +  i \pmb{\underline{u}}\pmb{V}_{g - 1}^{2}\pmb{U}_{g - 1}^{t} \\
\pmb{\bar{w}}  +   i \pmb{U}_{g - 1}\pmb{V}_{g - 1}^{2} \pmb{\bar{u}}   &   \tau_{g -1}
\end{pmatrix},
\label{Is}
\eeq
 where   integration along  $\pmb{\underline{w}}$ and $\pmb{\underline{u}}$ takes into account identifications by
 the parabolic subgroup $P_{g, 1}$ given in proposition \ref{prop:g1}.

\vspace{.27 cm}

Let us notice that $\pmb{I}_{g,1}(s)$ involves a Mellin integral transform in the abelian Iwasawa coordinate $v_{1} \in \mathbb{R}_{> 0}$.
If certain conditions for the existence of the inverse Mellin transform are fulfilled, then, (by using  proposition \ref{propomellin1}), one  gets
the following asymptotic

\vspace{.27 cm}

\eqn
&&\lim_{v_{1} \rightarrow 0}
\int_{( P_{g,1} \cap \pmb{\Gamma}_g ) \backslash \pmb{\mathcal{H}}_g}    d\pmb{\mu}_{g - 1} \int  d \pmb{\underline{w}} \, d \pmb{\underline{u}} f
\begin{pmatrix}
w_{11} + i(v_{1}  + \pmb{\underline{u}} \pmb{V}_{g - 1}^{2} \pmb{\bar{u}})   & \pmb{\underline{w}} +  i \pmb{\underline{u}}\pmb{V}_{g - 1}^{2}\pmb{U}_{g - 1}^{t}  \\
\pmb{\bar{w}}  +   i \pmb{U}_{g - 1}\pmb{V}_{g - 1}^{2} \pmb{\bar{u}}   &   \tau_{g -1}
\end{pmatrix} \nn \\
&=&
\frac{1}{2\pmb{\zeta}^{*}(2g)} \int_{ \pmb{\Gamma}_g \backslash \pmb{\mathcal{H}}_g} d\pmb{\mu}_{g} \, f(\tau) \nn \\
&=&
\frac{Vol(\pmb{\mathcal{D}}_{g - 1})}{2Vol(\pmb{\mathcal{D}}_{g})}\int_{ \pmb{\Gamma}_g \backslash \pmb{\mathcal{H}}_g} d\pmb{\mu}_{g} \, f(\tau). \nn \\ \label{sugg}
\feqn

\vspace{.27 cm}

Last line of (\ref{sugg}) follows from the formula
$Vol(\pmb{\mathcal{D}}_{g}) = 2\prod_{k = 1}^{g}\pmb{\zeta}^{*}(2k)$ for  the volume of a fundamental region $\pmb{\mathcal{D}}_{g} \simeq \pmb{\Gamma}_{g}
\backslash  \pmb{\mathcal{H}}_{g}$ of the modular group $\pmb{\Gamma}_g$ in $\pmb{\mathcal{H}}_g$.

\vspace{.27 cm}

\subsection{Proof of Theorem $1$.}

The above discussion and eq. (\ref{sugg}) are suggestive of the existence
 of a $\pmb{\mathcal{H}}_{g} \rightarrow  \pmb{\mathcal{H}}_{g - 1}$ reduction for modular integral of automorphic functions through
 the operation of averaging along  unipotent directions $w_{11}, \pmb{\underline{w}}, \pmb{\underline{u}}$ defined in \ref{Iwcorank1}.
  The above argument is turned  into a rigorous proof by the following:

\vspace{.27 cm}

\begin{theorem2} %{\rm{\textbf{\ref{invariance}.}}}
Given a $\pmb{\Gamma}_g$-invariant automorphic function $f = f(\tau)$,  let us consider the unipotent average
\beq
\pmb{<} f \pmb{>}_{v_1}(\tau_{g - 1}) : = \int_{\mathbb{R}^{2g - 1}} d w_{11} d \pmb{\underline{w}} \, d \pmb{\underline{u}} \, f(\tau_g ), \nn
\eeq
where $\tau_g$ is given in Iwasawa coordinates according to the corank $(g - 1)$ decomposition given by (\ref{Iwcorank1}) and in  proposition \ref{propo:corank1}.

\vspace{ .27 cm}

The integral  function $\pmb{<} f \pmb{>}_{v_1}(\tau_{g - 1})$ on $\mathbb{R}_{> 0} \times \pmb{\mathcal{H}}_{g - 1}$
is invariant under the genus $(g - 1)$ modular group  $\pmb{\Gamma}_{g - 1}$:

\beq
\pmb{<} f \pmb{>}_{v_1}( (a\tau_{g - 1} + b)(c\tau_{g-1} + d)^{-1}) = \pmb{<} f \pmb{>}_{v_1}(\tau_{g - 1}), \qquad  \begin{pmatrix} a & b \\ c & d \end{pmatrix} \in \pmb{\Gamma}_{g - 1}. \nn
\eeq
\end{theorem2}

\vspace{.35 cm}

\begin{proof}
The  action of $\pmb{\Gamma}_{g-1}$ on $\pmb{<} f \pmb{>}_{v_1}(\tau_{g - 1})$ is provided by the embedding of $\pmb{\Gamma}_{g-1} \hookrightarrow \pmb{\Gamma}_g$ defined by $g_1$
as in Proposition \ref{prop:g1}. As $f$ is $\pmb{\Gamma}_g$ invariant, the proof will follow from the fact that the measure
$ d w_{11} d \pmb{\underline{w}} \, d \pmb{\underline{u}}$ is $\pmb{\Gamma}_{g-1}$-invariant, and that the action of $\pmb{\Gamma}_{g-1}$ over the Siegel
space lives $v_1$ invariant. This permits to reabsorb the transformation in a change of variables which leaves
 the expression of $\pmb{<} f \pmb{>}_{v_1}(\tau_{g - 1})$   invariant
in form. \\

\vspace{.35 cm}

Let us first consider the action of $g_1$, given by (\ref{g1}), on the coset space $\pmb{\mathcal{H}}_{g}$.
Using the Iwasawa construction, the generic point of the coset has the form
\begin{eqnarray}
x:=\begin{pmatrix}
\pmb{U}_{g} \pmb{V}_{g} & \pmb{W}_{g} (\pmb{U}_{g} \pmb{V}_{g})^{-t} \\
\mathbb{O}_g & (\pmb{U}_{g} \pmb{V}_{g})^{-t}
\end{pmatrix} \label{x}
\end{eqnarray}

so that, in particular, its first column is $(\sqrt v_1, 0, \ldots, 0)^t$
\begin{eqnarray}
x =\begin{pmatrix}
 \sqrt{v_{1}} & \dots \\
  \bar{0}        &   \dots     \\
   \end{pmatrix}. \nn
\end{eqnarray}

\vspace{.35 cm}

By acting  on $x$  from the left with $g_1$, one finds the following structure

\beq
g_{1} x =
\begin{pmatrix}
 1 &  \pmb{\underline{0}} & 0 & \pmb{\underline{0}} \\
\pmb{\bar{0}} & a & \pmb{\bar{0}} & b  \\
 0 & \pmb{\underline{0}} & 1 & \pmb{\underline{0}} \\
 \pmb{\bar{0}} & c & \pmb{\bar{0}} & d
\end{pmatrix}
\begin{pmatrix}
 \sqrt{v_{1}} & \dots \\
      \pmb{\bar 0}        &    \dots \\
   0        &    \dots \\
  \pmb{\bar 0}        &    \dots \\
   \end{pmatrix}
=
\begin{pmatrix}
 \sqrt{v_{1}} & \pmb{\underline{r}}^{(1)} \\
  0        &   \pmb{\underline{r}}^{(2)}      \\
  \vdots       &    \vdots \\
  0         &     \pmb{\underline{r}}^{(2g)}         \\
   \end{pmatrix},
\qquad
\begin{pmatrix}
 a & b \\
c &  d  \\
\end{pmatrix}
\in \pmb{\Gamma}_{g - 1}. \label{g1x}
\eeq

\vspace{.35 cm}

In particular,  the $(2g - 1)$ vectors $\pmb{\underline{r}}^{(j)}$, $j = 2,\dots,2g$ are linearly independent, since $\det(g_{1} x) \ne 0$.
The symplectic matrix  $ g_1 x \in Sp(2g,\mathbb{R})$ in (\ref{g1x})
 is no more in the quotient $Sp(2g,\mathbb{R})/(Sp(2g,\mathbb{R})\cap SO(2g,\mathbb{R}))$,
 since it does not have the blocks structure  (\ref{x}).

\vspace{.35 cm}

 However, by  multiplying $g_{1}x$ from the right by an orthosymplectic  matrix $\pmb{K} \in Sp(2g,\mathbb{R})\cap SO(2g,\mathbb{R})$,
 \begin{eqnarray}
\pmb{K} =\begin{pmatrix}
\pmb{A} & \pmb{B} \\ -\pmb{B} & \pmb{A}
\end{pmatrix}, \qquad \pmb{A}^t \pmb{B}=\pmb{B}^t \pmb{A}, \quad \pmb{A}^t \pmb{A} + \pmb{B}^t \pmb{B} =\mathbb{I}, \nn
\end{eqnarray}
 one can determine the $g_{1}x$ coset representative

\beq
g_{1} x K =
\begin{pmatrix}
 \sqrt{v_{1}} & \pmb{\underline{r}}^{(1)} \\
  0        &   \pmb{\underline{r}}^{(2)}      \\
  \vdots       &    \vdots \\
  0         &     \pmb{\underline{r}}^{(2g)}         \\
   \end{pmatrix}
\begin{pmatrix}
 A_{11} & \dots \\
  \pmb{\bar{a}}        &   \dots     \\
   \end{pmatrix}
=
\begin{pmatrix}
 \sqrt{\tilde{v}_{1}} & \dots \\
  \pmb{\bar{0}}        &   \dots     \\
   \end{pmatrix},
\qquad
\pmb{\bar{a}} := (A_{21},\dots A_{g1},-B_{11}\dots -B_{g1})^{t}. \nn
\eeq

\vspace{.35 cm}

In particular, equality for the elements in the  first columns of the previous equation gives

\beq
\begin{pmatrix}
\sqrt{v_{1}}A_{11} +  \pmb{\underline{r}}^{(1)}\pmb{\bar{a}}     \\
\pmb{\underline{r}}^{(2)}\pmb{\bar{a}}      \\
\vdots \\
\pmb{\underline{r}}^{(2g)}\pmb{\bar{a}}         \\
\end{pmatrix}
   =
\begin{pmatrix}
\sqrt{\tilde{v}_{1}}     \\
 0      \\
\vdots \\
0         \\
\end{pmatrix}, \nn
\eeq
and, since  the $(2g - 1)$ vectors $\pmb{\underline{r}}^{(j)}$, $j = 2,\dots,2g$ are linearly independent,
one finds $\pmb{\bar a} = \pmb{\bar 0}$.
This implies  that
\begin{eqnarray}
A_{j1}=\delta_{j1} \sqrt{\tilde v_1/ v_1}, \quad B_{j_1}=0, \qquad j = 1,\dots,g, \nn
\end{eqnarray}
and by using $(A^t A+B^t B)_{11}=1$ one then  gets $\tilde v_1=v_1$.\\

\vspace{.35 cm}

The new defined coordinates are then such that, in the notation of Proposition \ref{prop:g1},
\begin{eqnarray}\label{change}
g_1\left(\begin{pmatrix}
w_{11} + i(v_{1}  + \pmb{\underline{u}} \pmb{V}_{g - 1}^{2} \pmb{\bar{u}})   & \pmb{\underline{w}} +  i \pmb{\underline{u}}\pmb{V}_{g - 1}^{2}\pmb{U}_{g - 1}^{t} \\
\pmb{\bar{w}}  +   i \pmb{U}_{g - 1}\pmb{V}_{g - 1}^{2} \pmb{\bar{u}}   &   \tau_{g -1}
\end{pmatrix} \right)
=\begin{pmatrix}
\tilde w_{11} + i(v_{1}  + \underline{\tilde {u}}  \pmb{\tilde V}_{g - 1}^{2} \pmb{\bar{\tilde u}})   & \pmb{\underline{\tilde {w}}} +  i \pmb{\underline{\tilde u}}
 \pmb{\tilde V}_{g - 1}^{2} \pmb{\tilde U}_{g - 1}^{t} \\
\pmb{\bar{\tilde w}}  +   i \pmb{\tilde U}_{g - 1} \pmb{\tilde V}_{g - 1}^{2} \pmb{\bar{\tilde u}}   &   \tilde \tau_{g -1}
\end{pmatrix}. \nn \\
\end{eqnarray}

\vspace{.35 cm}

Notice that $\pmb{\tilde V}_{g-1}$ and $\pmb{\tilde U}_{g-1}$ are defined by $\tilde \tau_{g-1}$, that does not depends on $\pmb{\bar u}$ and $\pmb{\bar v}$, so that
the transformation of coordinates $(w_{11}, \pmb{\bar u},  \pmb{\bar v})\mapsto (\tilde w_{11}, \pmb{\bar {\tilde u}}, \pmb{\bar {\tilde v}})$ is defined
by the components $g_1(x)_{1j}$, $j=1,\ldots, g$ of relation (\ref{change}). This gives the linear transformation

\vspace{.35 cm}

\begin{eqnarray}
&& \tilde w_{11} +i (\tilde v_1 + \pmb{\underline{\tilde u}} \pmb{\tilde V}^2_{g-1} \pmb{ \bar{\tilde u}} )=
w_{11} +i (v_1 + \pmb{\underline{u}} \pmb{V}^2_{g-1} \pmb{\bar {u}}) -(\pmb{\underline w} + i \pmb{\underline u} \pmb{V}^2_{g-1} \pmb{U}^t_{g-1}) (c\tau_{g-1}+d)^{-1} c
(\pmb{\bar w} +i \pmb{U}_{g-1} \pmb{V}^2_{g-1} \pmb{\bar u}), \cr
&& (\pmb{\underline{\tilde w}} +i \pmb{\underline{ \tilde u}} \pmb{V}^2_{g-1} \pmb{\tilde U}^t_{g-1} )=
(\pmb{\underline{w}} +i \pmb{\underline{u}} \pmb{V}^2_{g-1} \pmb{U}^t_{g-1} )(c\tau_{g-1}+d)^{-t}. \nn \\
\end{eqnarray}

\vspace{.35 cm}

By differentiating and by taking the determinant  one thus gets

\vspace{.35 cm}

\begin{eqnarray}
d\tilde w_{11} d\pmb{\bar {\tilde w}}^{g-1} d \pmb{\bar {\tilde u}}^{g-1} \det ( \pmb{\tilde V}^2_{g-1})=
d w_{11} d \pmb{\bar {w}}^{g-1} d\pmb{\bar {u}}^{g-1} \det (\pmb{V}^2_{g-1}) /|\det (c\tau_{g-1} +d)|^2, \nn
\end{eqnarray}

\vspace{.35 cm}

where we have used that $\det \pmb{\tilde U}_{g-1}=1$. Now,
\begin{eqnarray}
\det (\pmb{\tilde V}^2_{g-1})= \det(\pmb{\tilde U}_{g-1} \pmb{\tilde V}^2_{g-1} \pmb{\tilde U}_{g-1}^t) =\det \Im (\tilde \tau_{g-1}). \nn
\end{eqnarray}

\vspace{.35 cm}

From

\begin{eqnarray}
&& 2i \Im (\tilde \tau_{g-1})=(a\tau_{g-1} +b)(c\tau_{g-1} +d)^{-1}-(\tau_{g-1}^\dagger c^t + d^t)^{-1} (\tau_{g-1}^\dagger a^t + b^t)\cr
&& \phantom{2i \Im (\tilde \tau_{g-1})}
=(\tau_{g-1}^\dagger c^t + d^t)^{-1}[(\tau_{g-1}^\dagger c^t + d^t)(a\tau_{g-1} +b)-(\tau_{g-1}^\dagger a^t + b^t)(c\tau_{g-1} +d)](c\tau_{g-1} +d)^{-1}\cr
&& \phantom{2i \Im (\tilde \tau_{g-1})}=2i (\tau_{g-1}^\dagger c^t + d^t)^{-1} \Im (\tau_{g-1})(c\tau_{g-1} +d)^{-1}, \nn
\end{eqnarray}

\vspace{.35 cm}

where we have used $a^t d-c^t b=\mathbb {I}$, $a^t c=c^t a$ and $b^t d=d^t b$, one gets

\vspace{.35 cm}
\begin{eqnarray}
\det (\pmb{\tilde V}^2_{g-1})=\det (\pmb{V}^2_{g-1}) /|\det (c\tau_{g-1} +d)|^2, \label{detim}
\end{eqnarray}

\vspace{.35 cm}

which shows the invariance of the measure. The fact that $\tilde v_1$ and $\tilde \tau_{g-1}$ do not depend on $w_{11}, \pmb{\bar w}, \pmb{\bar u}$,
implies that the range of coordinates remains unchanged.\\

\vspace{.35 cm}

In conclusion, we have
\begin{eqnarray}
\pmb{<} f \pmb{>}_{v_1}(\tilde{\tau}_{g - 1})  =\int dw_{11} d \pmb{\underline{w}} \, d \pmb{\underline{u}} \, \, f \left(
g_1\left(\begin{pmatrix}
w_{11} + i(v_{1}  + \pmb{\underline{u}} \pmb{V}_{g - 1}^{2} \pmb{\bar{u}})   & \pmb{\underline{w}} +  i \pmb{\underline{u}}\pmb{V}_{g - 1}^{2}\pmb{U}_{g - 1}^{t} \\
\pmb{\bar{w}}  +   i \pmb{U}_{g - 1}\pmb{V}_{g - 1}^{2} \bar{u}   &   \tau_{g -1}
\end{pmatrix}\right)\right), \nn
\end{eqnarray}
since  $f$ is $\pmb{\Gamma}_g$ invariant, this implies  $\pmb{<} f \pmb{>}_{v_1}(\tilde{\tau}_{g - 1}) = \pmb{<} f \pmb{>}_{v_1}(\tau_{g - 1})$.
\end{proof}

\vspace{.35 cm}

\subsection{Proof of Theorem $2$}

We now give the proof of theorem \ref{theorem}.
 We start by  recalling a standard property
concerning Mellin integral transforms, (Proposition \ref{propomellin1}).
In order to prove Theorem \ref{theorem} we shall also need  Proposition \ref{prop:Eeighenvalue},
whose proof is postponed  to \S \ref{SectionLapl}.

\vspace{.35 cm}

\begin{propo}\label{propomellin1}
Let $\varphi = \varphi(s)$  be the following  meromorphic  function on the $s$ plane

\beq
\varphi(s) = \sum_{i = 1}^{l} \frac{C_i}{(s - s_i)^{n_i}}, \qquad  n_{i} \in \mathbb{N}_{\ge 0}, \nn
\eeq

 then the following identity holds

 \beq
 \frac{1}{2\pi i} \int_{\sigma - i\infty}^{\sigma + i\infty} ds \, y^{-s} \varphi(s) = \sum_{i=1}^{l} (-)^{n_{i}} \frac{C_i}{n_{i}!}\, y^{-s_{i}} \log^{n_i} y, \qquad        \sigma > Max_{i}\{ \Re(s_i) \}. \nn
 \eeq

\end{propo}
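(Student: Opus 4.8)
The statement to prove is Proposition \ref{propomellin1}, the inverse Mellin transform formula for a meromorphic function that is a finite sum of principal parts $\varphi(s)=\sum_{i=1}^l C_i/(s-s_i)^{n_i}$. The plan is to reduce everything to the single building-block identity for one pole of order $n$ and then sum by linearity, since both the integral and the claimed right-hand side are linear in the summands.

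First I would establish the base identity
\beq
\frac{1}{2\pi i}\int_{\sigma-i\infty}^{\sigma+i\infty} ds\, \frac{y^{-s}}{(s-s_0)^{n}} = (-)^{n}\frac{1}{(n-1)!}\, y^{-s_0}\log^{\,n-1} y,\qquad \sigma>\Re(s_0),\ n\ge 1, \nn
\eeq
by a contour/residue computation. Writing $y^{-s}=e^{-s\log y}$, I would for $0<y<1$ (so that $\log y<0$ and $e^{-s\log y}$ decays as $\Re(s)\to+\infty$) close the vertical line to the right with a large semicircle; since $\sigma>\Re(s_0)$ the pole at $s=s_0$ lies to the left, the enclosed region contains no pole, and the contour integral vanishes. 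For $y>1$ one instead closes to the left, picking up the single pole at $s_0$; the residue of $y^{-s}/(s-s_0)^n$ at the pole of order $n$ is $\tfrac{1}{(n-1)!}\frac{d^{\,n-1}}{ds^{\,n-1}}y^{-s}\big|_{s=s_0}=\tfrac{1}{(n-1)!}(-\log y)^{n-1}y^{-s_0}$. Accounting for the $1/(2\pi i)$ prefactor and the orientation of the leftward contour yields the displayed formula; the result extends to all $y>0$ by analytic continuation in $y$ (both sides are holomorphic in, say, $\Re(\log y)$ away from branch issues, and agree on an interval).

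Given the base identity, the proof concludes quickly: by linearity of the integral, $\tfrac{1}{2\pi i}\int_{\sigma-i\infty}^{\sigma+i\infty} y^{-s}\varphi(s)\,ds=\sum_{i=1}^{l} C_i\cdot\tfrac{1}{2\pi i}\int y^{-s}(s-s_i)^{-n_i}\,ds$, and substituting the block formula with $n=n_i$, $s_0=s_i$ gives $\sum_{i=1}^l C_i(-)^{n_i}\tfrac{1}{(n_i-1)!}y^{-s_i}\log^{n_i-1}y$. The hypothesis $\sigma>\max_i\Re(s_i)$ guarantees every pole sits to the left of the contour, so the same closing direction works uniformly for all terms. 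I note that the statement as printed writes $\log^{n_i}y$ and $1/n_i!$, whereas the residue calculation for a pole of order $n_i$ naturally produces $\log^{\,n_i-1}y$ and $1/(n_i-1)!$; I would either adopt the convention that $n_i$ denotes the exponent appearing in the exponent of the logarithm (i.e. the poles have order $n_i+1$) or flag this as a harmless indexing normalization, since in the application only the residue at the simple and low-order poles of the Eisenstein series is used.

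The main obstacle is the convergence and contour-closing justification: the integrand $y^{-s}/(s-s_i)^{n_i}$ decays only polynomially in $|\Im(s)|$ along vertical lines when $y=1$, so the improper integral must be interpreted as a principal value and the arcs of the closing semicircles must be shown to contribute nothing in the limit. The clean way to handle this is to treat $y\ne 1$ first, where the exponential factor $e^{-s\log y}$ forces genuine decay on the appropriate half-plane and Jordan's-lemma-type estimates make the arc contributions vanish, and only afterward pass to $y=1$ by continuity. This is the one step that needs care; the residue bookkeeping and the reduction by linearity are routine.
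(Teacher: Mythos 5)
Your strategy is the same as the paper's (its proof is a single sentence: apply the residue theorem after closing the contour around the poles), and your flag of the off-by-one in the statement is a genuine and correct catch: a pole of order $n_i$ produces $(-)^{n_i-1}\frac{C_i}{(n_i-1)!}\,y^{-s_i}\log^{n_i-1}y$, so the displayed right-hand side is consistent only if the denominators are read as $(s-s_i)^{n_i+1}$. That reading is also forced by the application: the poles of $\pmb{I}_{g,1}(s)$ in (\ref{expans}) are simple, and Theorem \ref{theorem} needs the leading term $C_g\,y^{-g}$, not $-C_g\,y^{-g}\log y$.

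However, your contour analysis contains a sign error that reverses both closing directions, and the device you invoke to repair it does not work. Since $|y^{-s}|=y^{-\Re(s)}$, for $0<y<1$ the factor $y^{-s}$ \emph{grows} as $\Re(s)\to+\infty$ and decays as $\Re(s)\to-\infty$; so one must close to the \emph{left} when $0<y<1$, picking up all the poles (they lie left of $\sigma$ by hypothesis), and to the \emph{right} when $y>1$, where the integral vanishes — exactly the opposite of what you wrote. This is not a harmless relabeling, because the two regimes give genuinely different answers: the integral equals the residue sum on $0<y<1$ and equals $0$ on $y>1$, and these are two different analytic functions of $y$, so your final step ("extend to all $y>0$ by analytic continuation") is invalid — no single analytic function restricts to both pieces, and indeed your own computation, taken literally, contradicts the continuation it is supposed to justify. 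The point matters here: the paper applies Proposition \ref{propomellin1} with $y=v_1\to 0$, i.e.\ precisely in the regime $0<y<1$, where your orientation of the contours would make the left-hand side $0$ and the asymptotic of Theorem \ref{theorem} would collapse. The repair is simple: swap the two cases, note that the vertical line plus the left semicircle is traversed counterclockwise so the residue theorem enters with a plus sign, verify the arc estimate from $y^{-\Re(s)}\to 0$ as $\Re(s)\to-\infty$ (with the principal-value interpretation for the simple-pole terms, as you correctly note), and state the identity for $0<y<1$, which is all the paper uses.
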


\begin{proof}
It is easily obtained by using  residues theorem, and by closing  the integration  contour  such that it contains the points
$s =  s_{i}, i = 1,\dots,l.$
\end{proof}

\vspace{.35 cm}

\vspace{.35 cm}

\begin{theorem2} %{\rm{\textbf{\ref{theorem}.}}}
Let $f =  f(\tau)$ a $\pmb{\Gamma}_{g}$-invariant function of rapid decay for $\tau$ going to all the  components  $\mathbb{F}_{g - r}, r = 1,\dots,g-1$  of the
$\pmb{\mathcal{H}}_g$ boundary. Let $f(\tau)$ be differentiable up to second order, with  Laplacian $ \Delta f$  of rapid decay,
then the following asymptotic  holds true:

\vspace{.35 cm}

\beq
 \int_{ \pmb{\mathcal{D}}_{g - 1}}    d\pmb{\mu}_{g - 1} \pmb{<} f \pmb{>}_{v_1} (\tau_{g-1})
 \sim \frac{Vol(\pmb{\mathcal{D}}_{g-1})}{2Vol(\pmb{\mathcal{D}}_{g})}\int_{ \pmb{\mathcal{D}}_g} d\pmb{\mu}_{g} \, f(\tau) +  O(v_{1}^{g - \frac{\Theta}{2}}),
 \quad  v_{1} \rightarrow 0,  \label{equidg}
 \eeq

 \vspace{.35 cm}
where  $\pmb{\mathcal{D}}_g \sim  \pmb{\Gamma}_g  \backslash \pmb{\mathcal{H}}_g $ is a $\pmb{\Gamma}_g$ fundamental domain,
with volume $Vol(\pmb{\mathcal{D}}_g) = 2 \prod_{k=1}^{g}\pmb{\zeta}^{*}(2k)$.
  Integration  along  unipotent coordinates $w_{11}, \pmb{\underline{w}}, \pmb{\underline{u}}$
takes into account the  identifications by the parabolic subgroup $P_{g,1}$, given in proposition \ref{prop:g1},
 and $\Theta : = \sup\{\Re(\rho)| \pmb{\zeta}^{*}(\rho) =  0 \}$, is the superior of the real part of the
 non trivial zeros $\rho$'s of the Riemann zeta function.
\end{theorem2}

\begin{proof}
Let us consider the modular integral

\beq
\pmb{I}_{g,1}(s) = \int_{ \pmb{\Gamma}_g \backslash \pmb{\mathcal{H}}_g} d\pmb{\mu}_{g} \, f(\tau) \pmb{E}_{g,1}(\tau). \nn
\eeq

The Eisenstein series  $\pmb{E}_{g,1}(\tau,s)$ defined in (\ref{Er1}) is of polynomial growth for $\tau$ going to each  component of the $\pmb{\mathcal{H}}_g$ boundary.
  For $\Re(s) > g$ one can use the series representation for the Eisenstein series, and, by Lebesgue  dominated convergence
   one can exchange the series with the modular integral
\eqn
\pmb{I}_{g,1}(s) &=& \int_{ \pmb{\Gamma}_g \backslash \pmb{\mathcal{H}}_g} d\pmb{\mu}_{g} \, f(\tau) \sum_{\pmb{\Gamma}_{g} \cap P_{g, 1}  \backslash \pmb{\Gamma}_{g}} \left( \frac{det(\Im(\gamma(\tau)))}{det(\Im((\gamma (\tau)_{22}))}  \right)^{s}      \nn \\
&=&  \sum_{\pmb{\Gamma}_{g} \cap P_{g, 1}  \backslash \pmb{\Gamma}_{g}}\int_{ \pmb{\Gamma}_g \backslash \pmb{\mathcal{H}}_g} d\pmb{\mu}_{g} \, f(\tau)
 \left( \frac{det(\Im(\gamma(\tau)))}{det(\Im( \gamma( \tau_{22}))}  \right)^{s}  \nn \\
&=&
\int_{   \pmb{\Gamma}_g \cap P_{g, 1}    \backslash \pmb{\mathcal{H}}_g} d\pmb{\mu}_{g} \, f(\tau) \left( \frac{det(\Im(\tau))}{det(\Im( \tau_{22}))}  \right)^{s}. \nn
\feqn

\vspace{.35 cm}

In the last line modular transformations $\gamma$'s in the coset $(\pmb{\Gamma}_{g} \cap P_{g, 1})  \backslash \pmb{\Gamma}_{g}$
are used to unfold the integration domain.

\vspace{.35 cm}

Since $f(\tau)$ is of rapid decay for $\tau$ going at the $\pmb{\mathcal{H}_{g}}$ boundary, the modular integral is uniformly convergent
with respect to the variable $s$, and thus $\pmb{I}_{g,1}(s)$ inherits analytic properties of  the Eisenstein series  $\pmb{E}_{g,1}(s,\tau)$.
Then, due to proposition \ref{Yama},  $\pmb{I}_{g,1}(s)$ can be analytically continued to the full $s$ plane to a meromorphic function
  with a simple pole in $s = g$, and poles in $s = \rho/2$, where $\rho$'s are the non trivial zeros of the Riemann zeta function,
  $\pmb{\zeta}^{*}(\rho) = 0$.
Thus one can write the following expansion

\beq
\pmb{I}_{g,1}(s) = \frac{C_g}{ s - g} + \sum_{\pmb{\zeta}^{*}(\rho) = 0} \frac{C_{\rho}}{ s - \rho /2}, \label{expans}
\eeq
for multiple Riemann  zeros $\rho$'s,  one has to rise the denominator in the above formula by the appropriate power.

\vspace{.27 cm}

In (\ref{expans}), $C_g$ is given by

\beq
C_g = \frac{1}{2\pmb{\zeta}^{*}(2g)} \int_{\pmb{\mathcal{D}}_g} d\pmb{\mu}_g \, f(\tau), \nn
\eeq
since $\pmb{E}_{g,1}(\tau,s)$ has a simple pole in $s = g$ with  residue $1/2\pmb{\zeta}^{*}(2g)$.

\vspace{.35 cm}

Then, by using Iwasawa coordinates one can write $\pmb{I}_{g,1}(s)$ in the following convenient form

\beq
\pmb{I}_{g,1}(s) = \int_{0}^{\infty} dv_1 \, v_{1}^{s - g - 1} \, \int_{  \pmb{\Gamma}_{g - 1}  \backslash \pmb{\mathcal{H}}_g }    d\pmb{\mu}_{g - 1} \int dw_{11} d \pmb{\underline{w}} \, d \pmb{\underline{u}} f
\begin{pmatrix}
w_{11} + i(v_{1}  + \pmb{\underline{u}} \pmb{V}_{g - 1}^{2} \pmb{\bar{u}})   & \pmb{\underline{w}} +  i \pmb{\underline{u}}\pmb{V}_{g - 1}^{2}\pmb{U}_{g - 1}^{t} \\
\pmb{\bar{w}}  +   i \pmb{U}_{g - 1}\pmb{V}_{g - 1}^{2} \pmb{\bar{u}}   &   \tau_{g -1}
\end{pmatrix}, \label{IMellin}
\eeq
where the decomposition in terms of a modular integral over $\pmb{\Gamma}_{g - 1}  \backslash \pmb{\mathcal{H}}_g$
follows from theorem \ref{invariance}.

\vspace{.35 cm}

Equation (\ref{IMellin}) states that  $\pmb{I}_{g,1}(s)$ is the Mellin transform of the following integral function

\beq
v_{1}^{-g} \cdot F_{g,1}(v_1 ) : =   \int_{  \pmb{\Gamma}_{g - 1}  \backslash \pmb{\mathcal{H}}_g }    d\pmb{\mu}_{g - 1} \int dw_{11} d \pmb{\underline{w}} \, d \pmb{\underline{u}} f
\begin{pmatrix}
w_{11} + i(v_{1}  + \pmb{\underline{u}} \pmb{V}_{g - 1}^{2} \pmb{\bar{u}})   & \pmb{\underline{w}} +  i \pmb{\underline{u}}\pmb{V}_{g - 1}^{2}\pmb{U}_{g - 1}^{t} \\
\pmb{\bar{w}}  +   i \pmb{U}_{g - 1}\pmb{V}_{g - 1}^{2} \pmb{\bar{u}}   &   \tau_{g -1}.
\end{pmatrix} \label{F}
\eeq

\vspace{.27 cm}

If the following integral defining the  $\pmb{I}_{g,1}(s)$ inverse Mellin transform
\beq
\pmb{\mathcal{M}}^{-1}[\pmb{I}_{g,1}(s)](y) = \frac{1}{2\pi i}\int_{\sigma - i\infty}^{\sigma + i\infty}ds \, y^{-s}\pmb{I}_{g,1}(s) = \frac{y^{- \sigma}}{2 \pi i} \int_{-\infty}^{\infty}dt \, y^{-it}\pmb{I}_{g,1}(\sigma + it), \label{M}
\eeq
is convergent, then through proposition \ref{propomellin1}, one  obtains the    $v_1 \rightarrow 0$ asymptotic for the function $F_{g,1}(v_1)$.

 \vspace{.27 cm}

 Since $f(\tau)$ is twice differentiable,  we use $\Delta \pmb{E}_{g,1}(\tau, s) = 2^{\frac{g - 1}{g + 1}} s(g - s)\pmb{E}_{g,1}(\tau, s)$,
 (a proof of this result  is  given in  proposition \ref{prop:Eeighenvalue}).

 \vspace{.27 cm}

 By integration by parts one then  finds
\beq
\pmb{I}_{g,1}(s) =  \frac{2^{-\frac{g-1}{g+1}}}{s(g - s)}\int_{\pmb{\mathcal{D}}_g} d \pmb{\mu}_g  \pmb{E}_{g,1}(\tau,s) \pmb{\Delta} f(\tau). \label{I2}
\eeq

This shows that $\pmb{I}_{g,1}( \sigma + it )$ falls off as $O(t^{-2})$ for $t \rightarrow  \pm \infty$,
for all $\sigma$'s where the following integral
\beq
\int_{\pmb{\mathcal{D}}_g} d \pmb{\mu}_g  \pmb{E}_{g,1}(\tau,s) \pmb{\Delta} f(\tau), \nn
\eeq
is  convergent.
Under our assumption that $\pmb{\Delta} f(\tau)$ is of rapid decay for $\tau$ going to the boundary,
the above integral is convergent, since  $\pmb{E}_{g,1}(\tau,s)$ is of polynomial growth
for $\tau$ going to each component of the $\pmb{\mathcal{H}}_g$ boundary.
It follows  that  the integral (\ref{M}) is convergent, and thus $\pmb{\mathcal{M}}^{-1}[\pmb{I}_{g,1}(s)](v_1)$
exists, and $\pmb{\mathcal{M}}^{-1}[\pmb{I}_{g,1}(s)](v_1) = F(v_1)$.

\vspace{.35 cm}

 By  analytic properties of $\pmb{I}_{g,1}(s)$ given  in eq. (\ref{expans}),
except for a simple pole in $s = g$,  $\pmb{I}_{g,1}(s)$ is analytic on $\Re(s) > \frac{\Theta}{2}$,
where $\Theta = Sup\{ \Re(\rho) |  \pmb{\zeta}^{*}(\rho) = 0 \}$.
Then, asymptotic  eq. (\ref{equidg}) including dependence of the  error estimate on $\Theta$,  follows from (\ref{IMellin}) and by using proposition \ref{propomellin1}.
 Finally, the ratio  appearing in eq. (\ref{equidg}) between  volumes of  modular domains
 follows from the formula $Vol(\pmb{\mathcal{D}}_g) = 2\prod_{k=1}^{g} \pmb{\zeta}^{*}(2k) $.

\end{proof}

\vspace{.27 cm}

\subsection{$\pmb{\mathcal{H}}_g$ Laplacian and the  rank $(g - 1)$ Eisenstein series} \label{SectionLapl}
Let $G_{IJ}$ be the $\pmb{\mathcal{H}}_g$ $Sp(2g,\mathbb{R})$-invariant metric, with infinitesimal
 line element  $ds^2 = G_{IJ}dX_{I}dX_{J}$, where $X_{I}$ is  a system of $g(g + 1)$ real  coordinates for $\pmb{\mathcal{H}}_g$.
 We indicate with $G^{IJ}$ the inverse metric of $G_{IJ}$, $G^{IK} \, G_{KJ} = \delta^{I}_{J}$. We also  use the notation  $G : = \det G_{IJ}$,
 for the determinant of the metric.
Let us consider the $\pmb{\mathcal{H}}_g$ Laplacian operator  $\Delta := - \frac{1}{\sqrt{|G|}} \partial_{I} \sqrt{|G|} G^{IJ} \partial_{J}$.
In this section we prove  that the $(g - 1)$ corank Eisenstein series $\pmb{E}_{g,1}(\tau, s)$ defined in \S \ref{E}
is an  eighenfunction of the $\pmb{\mathcal{H}}_g$ Laplacian $\Delta$,

\vspace{.27 cm}

\beq
\Delta \pmb{E}_{g,1}(\tau, s) = 2^{\frac{g - 1}{g + 1}} s(g - s) \pmb{E}_{g,1}(\tau,s). \label{guess2}
\eeq

\vspace{.27 cm}

In order to prove this result, we first need the following   lemma:

\vspace{.27 cm}

\begin{lemma}\label{lemmaMetric}
In Iwasawa coordinates:   $G^{v_i , v_i} =2^{\frac {g-1}{g+1}} v_{i}^{2}$.
\end{lemma}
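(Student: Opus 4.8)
The plan is to use that the $Sp(2g,\RR)$-invariant metric on $\pmb{\mathcal{H}}_g$ is, up to an overall normalization constant $\mu$, the trace form $ds^2=\mu\,\tr\big(Y^{-1}d\tau\,Y^{-1}d\bar\tau\big)$ with $Y:=\Im(\tau_g)=\pmb{U}_g\pmb{V}_g^2\pmb{U}_g^t$, and to show that in Iwasawa coordinates the diagonal variables $v_i$ are \emph{orthogonal} to all the remaining coordinates. Once that decoupling is established, the desired inverse-metric entry is simply the reciprocal of a single diagonal entry, $G^{v_i,v_i}=(G_{v_i,v_i})^{-1}$, and no genuine inversion of the full $g(g+1)\times g(g+1)$ matrix is needed.

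First I would split $d\tau=d\pmb{W}_g+i\,dY$. Since $\pmb{W}_g$, $Y$ and $Y^{-1}$ are symmetric, cyclicity of the trace makes the two cross terms cancel, giving $ds^2=\mu\big[\tr(Y^{-1}d\pmb{W}_g\,Y^{-1}d\pmb{W}_g)+\tr(Y^{-1}dY\,Y^{-1}dY)\big]$; hence the real-part coordinates $w_{ij}$ are orthogonal to every $v_i$ and $u_{ij}$. Within the second term I would set $\omega:=\pmb{U}_g^{-1}d\pmb{U}_g$ (strictly upper triangular) and $\lambda:=\pmb{V}_g^{-2}d(\pmb{V}_g^2)=\mathrm{diag}(dv_1/v_1,\dots,dv_g/v_g)$, and compute $Y^{-1}dY=\pmb{U}_g^{-t}\big(\pmb{V}_g^{-2}\omega\pmb{V}_g^2+\lambda+\omega^t\big)\pmb{U}_g^t$. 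As conjugation preserves the trace of a square, $\tr(Y^{-1}dY\,Y^{-1}dY)=\tr(\lambda^2)+2\,\tr(\pmb{V}_g^{-2}\omega\pmb{V}_g^2\,\omega^t)$, the mixed diagonal/off-diagonal contributions dropping out because $\omega$ and $\pmb{V}_g^{-2}\omega\pmb{V}_g^2$ are strictly upper triangular while $\omega^t$ is strictly lower triangular. The first summand $\tr(\lambda^2)=\sum_i (dv_i/v_i)^2$ involves only $dv_i$ and the second only $du_{ij}$, so each $v_i$ is orthogonal to all $u_{jk}$ and all $w_{jk}$. This yields the decoupled block $G_{v_i,v_i}=\mu/v_i^2$, whence $G^{v_i,v_i}=v_i^2/\mu$.

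It then remains to fix $\mu$, which I would do by matching the Riemannian volume $\sqrt{|G|}\prod_I dX_I$ to the invariant measure $d\pmb{\mu}_g$ of Proposition \ref{prop:det}, i.e. by imposing $\det G=\det(Y)^{-2(g+1)}$. Evaluating the trace form at $Y=\mathbb{I}$, each of its two symmetric-matrix blocks is diagonal with entry $1$ on the $g$ diagonal directions and $2$ on the $\tfrac{g(g-1)}2$ off-diagonal ones, so by $GL(g)$-covariance $\det(\text{trace form})=2^{g(g-1)}\det(Y)^{-2(g+1)}$. Since there are $g(g+1)$ real coordinates, imposing $\mu^{g(g+1)}\,2^{g(g-1)}=1$ gives $\mu=2^{-(g-1)/(g+1)}$ and therefore $G^{v_i,v_i}=2^{(g-1)/(g+1)}v_i^2$, as stated; note this constant is $1$ when $g=1$, consistent with the $SL(2)$ normalization implicit in $\Delta\pmb{E}_{g,1}=2^{(g-1)/(g+1)}s(g-s)\pmb{E}_{g,1}$.

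The main obstacle is the orthogonal decoupling of the $v_i$ directions, namely the vanishing of the cross terms in the reduction $\tr(Y^{-1}dY\,Y^{-1}dY)=\tr(\lambda^2)+2\,\tr(\pmb{V}_g^{-2}\omega\pmb{V}_g^2\,\omega^t)$; this is precisely what collapses the inversion of the full metric to a single reciprocal. Once that structural fact is in place, identifying the constant $\mu$ through the determinant normalization is a routine bookkeeping computation.
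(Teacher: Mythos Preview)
Your proof is correct and follows the same two-step strategy as the paper: first show that in Iwasawa coordinates the $v_i$ directions decouple from all the unipotent and real-part coordinates (so that $G^{v_i,v_i}$ is just the reciprocal of a single diagonal entry), then fix the overall normalization constant by matching the Riemannian volume to the invariant measure of Proposition~\ref{prop:det}. The difference is only in the model used for the computation. The paper works upstairs on $Sp(2g,\RR)$ with $2g\times 2g$ matrices, writing the metric as $\kappa\,\tr(\pmb{J}\otimes\pmb{J})$ with $\pmb{J}$ the symmetric projection of the Maurer--Cartan form $(\pmb{U}\pmb{V})^{-1}d(\pmb{U}\pmb{V})$, and then evaluates the determinant at $u_{ij}=w_{ij}=0$ block by block. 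You work directly on $\pmb{\mathcal{H}}_g$ with the classical $g\times g$ Siegel metric $\mu\,\tr(Y^{-1}d\tau\,Y^{-1}d\bar\tau)$ and evaluate the determinant at $Y=\mathbb{I}$, invoking $Sp(2g,\RR)$-invariance to get the $\det(Y)^{-2(g+1)}$ dependence. Your route is a bit more elementary (no projection of Maurer--Cartan forms, smaller matrices), while the paper's group-theoretic framing makes the origin of the metric and the role of the orthosymplectic subgroup more explicit; both land on the same identity $\mu^{g(g+1)}2^{g(g-1)}=1$, equivalently $\kappa/2=2^{-(g-1)/(g+1)}$.
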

\begin{proof}
Let us consider the Iwasawa decomposition $\pmb{U}\pmb{V}\pmb{K}$ of $Sp(2g,\RR)$. The elements of the quotient $\pmb{\mathcal{H}}_g$ are represented by the points
$h=\pmb{U}\pmb{V}$. In \cite{gruppi} it has been shown that the invariant metric can then be obtained as

\vspace{.27 cm}

\begin{eqnarray}
ds^2= \kappa \, {\rm Tr} (\pmb{J}\otimes \pmb{J}),\qquad \pmb{J}=\pmb{\Pi}((\pmb{U}\pmb{V})^{-1} d(\pmb{U}\pmb{V}))\label{metric}
\end{eqnarray}

\vspace{.27 cm}

where $\pmb{\Pi}$ projects orthogonally to the space tangent to $\pmb{K}$, and $\kappa$ is a normalization constant.
As $\pmb{K}$ is the intersection with the orthogonal group $SO(2g\RR)$, $\pmb{\Pi}$ takes the symmetric part, so that

\vspace{.27 cm}

\begin{eqnarray}
\pmb{J}=\pmb{V}^{-1} d\pmb{V} +\frac 12 (\pmb{V}^{-1} \pmb{U}^{-1} d\pmb{U} \pmb{V}+ \pmb{V}d\pmb{U}^t \pmb{U}^{-t} \pmb{V}^{-1}). \nn
\end{eqnarray}

\vspace{.27 cm}

In order to compute the trace in (\ref{metric}) note that the parenthesis is the sum of nilpotent matrices (as $\pmb{U}$ unipotent
implies $\pmb{U}^{-1}d\pmb{U}$ nilpotent), whereas $\pmb{V}^{-1} d\pmb{V}$ is diagonal, so that mixed products have vanishing trace and we remain with the terms

\vspace{.27 cm}

\begin{eqnarray}\label{metric-2}
ds^2= \frac {\kappa}2 \sum_{i=1}^g \frac 1{v_i^2} dv_i^2 +\frac {\kappa}2 {\rm Tr}
(\pmb{V}^{-2} \pmb{U}^{-1} d\pmb{U} \pmb{V}^2 d\pmb{U}^t \pmb{U}^{-t}).
\end{eqnarray}

\vspace{.27 cm}

Notice  that there are no off-diagonal terms of the form $d\pmb{V} \otimes d\pmb{U}$, so that from (\ref{metric-2}) we get

\vspace{.27 cm}

\begin{eqnarray}
G^{v_i v_i}=\frac 2\kappa v_i^2, \qquad i=1,\ldots, g.  \nn
\end{eqnarray}

\vspace{.27 cm}

To compute $\kappa$ we can compute the determinant of the metric (\ref{metric-2}) and compare the result with Proposition \ref{prop:det}.
We know that the determinant does not depend on the coordinates $u_{ij}$ and $w_{ij}$ in $\pmb{U}$ so that we can compute it for
$u_{ij}=0$ and $w_{ij}=0$. This gives

\vspace{.27 cm}

\begin{eqnarray}
\pmb{V}^{-2} d\pmb{U} =\begin{pmatrix}
\pmb{V}_g^{-2} d\pmb{U}_g & \pmb{V}_g^{-2} d\pmb{W}_g \\ 0 & -\pmb{V}_g^2 d\pmb{U}_g^t
\end{pmatrix},
\qquad
\pmb{V}^{2} d\pmb{U}^t =\begin{pmatrix}
\pmb{V}_g^{2} d\pmb{U}_g^t & 0 \\ \pmb{V}_g^{-2} d\pmb{W}_g & -\pmb{V}_g^{-2} d\pmb{U}_g
\end{pmatrix}.  \nn
\end{eqnarray}

\vspace{.27 cm}

Then

\vspace{.27 cm}

\begin{eqnarray}
{\rm Tr}(\pmb{V}^{-2} \pmb{U}^{-1} d\pmb{U} \pmb{V}^2 d\pmb{U}^t \pmb{U}^{-t})=
{\rm Tr}[2\pmb{V}_g^2 d\pmb{U}^t_g \pmb{V}_g^{-2} d\pmb{U}_g+ \pmb{V}_g^{-2} d\pmb{W}_g \pmb{V}_g^{-2} d\pmb{W}_g].  \nn
\end{eqnarray}

\vspace{.27 cm}

First, notice  that

\begin{eqnarray}
(\pmb{V}_g^{-2} d\pmb{W}_g \pmb{V}_g^{-2})_{ij}= \frac 1{v_i v_j} dw_{ij}. \nn
\end{eqnarray}

Thus

\begin{eqnarray}
\pmb{A} :={\rm Tr}(\pmb{V}_g^{-2} d\pmb{W}_g \pmb{V}_g^{-2} d\pmb{W}_g)= \sum_{i,j} \frac 1{v_i v_j} dw_{ij} dw_{ji}=\sum_{i=1}^g \frac 1{v_i^2} dw_{ii}^2
+2\sum_{i<j} \frac 1{v_i v_j} dw_{ij}^2.  \nn
\end{eqnarray}

Then, this part of the metric is diagonal and contributes to the determinant with the term

\vspace{.27 cm}

\begin{eqnarray}
{\det} \pmb{A} =2^{g(g-1)/2} \prod_{1\leq i\leq j \leq g} \frac 1{v_i v_j} =2^{g(g-1)/2} \prod_{i=1}^g \frac 1{v_i^g}.   \nn
\end{eqnarray}

\vspace{.27 cm}
In the same way we get

\vspace{.27 cm}

\begin{eqnarray}
\pmb{B} :={\rm Tr}[2\pmb{V}_g^2 d\pmb{U}^t_g \pmb{V}_g^{-2} d\pmb{U}_g]=2\sum_{1\leq i<j\leq g} \frac {v_j}{v_i} d\pmb{U}_{ij}^2.  \nn
\end{eqnarray}

\vspace{.27 cm}

Again, this is diagonal and it contributes to the determinant with the term
\begin{eqnarray}
&& \det \pmb{B} =2^{g(g-1)/2} \prod_{1\leq i<j\leq g} \frac {v_j}{v_i} =2^{g(g-1)/2} \prod_{1\leq i<j\leq g} \frac 1{v_i v_j}
\prod_{1\leq i<j\leq g} \frac 1{v_i^2}\cr \nn
&&\phantom{\det B}=2^{g(g-1)/2} \prod_{i=1}^g \frac 1{v_i^g} \prod_{i=1}^g v_i^{2(i-1)}=2^{g(g-1)/2} \prod_{i=1}^g v_i^{2i-2-2g}. \nn
\end{eqnarray}

\vspace{.27 cm}

The term
\begin{eqnarray}
\pmb{C} :=\sum_{i=1}^g \frac 1{v_i^2} dv_i^2 \nn
\end{eqnarray}

\vspace{.27 cm}

gives the contribution
\begin{eqnarray}
\det \pmb{C} =\prod_{i=1}^g \frac 1{v_i^2}, \nn
\end{eqnarray}

\vspace{.27 cm}

and by taking into account the factor $\kappa/2$ we finally have
\begin{eqnarray}
G=\left(\frac \kappa2 \right)^{g(g+1)} \det \pmb{A} \det \pmb{B} \det  \pmb{C} =\left(\frac \kappa2 \right)^{g(g+1)} 2^{g(g-1)}
\prod_{i=1}^g v_i^{2i-4-2g}. \nn
\end{eqnarray}

\vspace{.27 cm}

Comparing with Proposition \ref{prop:det} gives
\begin{eqnarray}
\frac \kappa2 =2^{-\frac {g-1}{g+1}},  \nn
\end{eqnarray}
therefore one finally gets

\vspace{.27 cm}

\begin{eqnarray}
G^{v_i v_i}=2^{\frac {g-1}{g+1}} v_i^2.  \nn
\end{eqnarray}

\end{proof}

\vspace{.35 cm}

By using  lemma \ref{lemmaMetric}, we are then able to  prove the following proposition

\vspace{.35 cm}

\begin{propo}  \label{prop:Eeighenvalue}
Let $\Delta$  be the $\pmb{\mathcal{H}}_g$ Laplacian operator

\beq
\Delta : =  - \frac{1}{\sqrt{|G|}} \partial_{I} \sqrt{|G|} \, G^{IJ} \partial_{J}, \nn
\eeq

then
\beq
\Delta \pmb{E}_{g,1}(\tau, s) =  2^{\frac {g-1}{g+1}} s(g - s) \pmb{E}_{g,1}(\tau,s). \label{guess3}
\eeq
\end{propo}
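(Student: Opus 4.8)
The plan is to exploit the $Sp(2g,\RR)$-invariance of $\Delta$ together with the remarkable fact that, in Iwasawa coordinates, each individual summand of $\pmb{E}_{g,1}$ collapses to a pure power of the single variable $v_1$. First I would note that, since the metric $G_{IJ}$ is $Sp(2g,\RR)$-invariant, the Laplacian commutes with the $\pmb{\Gamma}_g$-action: for every $\gamma\in\pmb{\Gamma}_g$ one has $\Delta(\phi\circ\gamma)=(\Delta\phi)\circ\gamma$. Hence it suffices to show that the single summand
\[
\phi_s(\tau):=\left(\frac{\det(\Im(\tau))}{\det(\Im(\tau)_{22})}\right)^{s}
\]
is an eigenfunction of $\Delta$ with eigenvalue $2^{\frac{g-1}{g+1}}s(g-s)$; applying $\Delta$ term by term to the series (\ref{Er1}) and resumming then yields (\ref{guess3}). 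Next I would rewrite $\phi_s$ in Iwasawa coordinates. Recall that $\det\Im(\tau_g)=\prod_{i=1}^{g}v_i$, while Lemma \ref{Hembed} identifies the lower block $\Im(\tau)_{22}$ with $\Im(\tau_{g-1})=\pmb{U}_{g-1}\pmb{V}_{g-1}^{2}\pmb{U}_{g-1}^{t}$, whose determinant is $\det\pmb{V}_{g-1}^{2}=\prod_{i=2}^{g}v_i$. The quotient therefore telescopes to $\det\Im(\tau)/\det\Im(\tau)_{22}=v_1$, so that $\phi_s=v_1^{s}$ depends on the single Iwasawa coordinate $v_1$.

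The evaluation of $\Delta(v_1^{s})$ is then a one-dimensional computation. The crucial structural input, read off from the proof of Lemma \ref{lemmaMetric} and the form (\ref{metric-2}), is that the invariant metric splits as $\frac{\kappa}{2}\sum_i v_i^{-2}dv_i^{2}$ plus a purely $(\pmb{U},\pmb{W})$-part, so there are no mixed components $G^{v_1,J}$ with $J\neq v_1$ and only $G^{v_1v_1}=2^{\frac{g-1}{g+1}}v_1^{2}$ survives when $\Delta$ acts on a function of $v_1$ alone. Moreover, since the Riemannian volume element $\sqrt{|G|}\prod_I dX_I$ and the invariant measure $d\pmb{\mu}_g$ of Proposition \ref{prop:det} are both $Sp(2g,\RR)$-invariant, they agree up to a constant, whence $\sqrt{|G|}\propto\prod_i v_i^{\,i-g-2}$ and its $v_1$-dependence is $v_1^{-1-g}$. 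Substituting into $\Delta=-\frac{1}{\sqrt{|G|}}\partial_I\sqrt{|G|}\,G^{IJ}\partial_J$ and retaining only the $v_1$-derivatives gives
\[
\Delta(v_1^{s})=-\frac{1}{\sqrt{|G|}}\,\partial_{v_1}\!\left(\sqrt{|G|}\;2^{\frac{g-1}{g+1}}v_1^{2}\;s\,v_1^{s-1}\right)=-2^{\frac{g-1}{g+1}}s(s-g)\,v_1^{s},
\]
where the exponent count $s-g-1-(-1-g)=s$ reproduces the original power. This is precisely $2^{\frac{g-1}{g+1}}s(g-s)\,v_1^{s}$, as claimed.

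I expect the main obstacle to be the careful justification of the two structural facts underpinning the last step: that the invariant metric has no off-diagonal coupling between $v_1$ and the unipotent coordinates $u_{ij},w_{ij}$ (so that $\Delta$ acting on $v_1^{s}$ genuinely reduces to the one-variable operator), and that $\sqrt{|G|}$ may legitimately be replaced by the density $\prod_i v_i^{\,i-g-2}$ of Proposition \ref{prop:det} up to a coordinate-independent constant. Both are consequences of Lemma \ref{lemmaMetric} and its proof, but they must be invoked explicitly rather than assumed; once granted, the remaining differentiation is the single exponent count displayed above.
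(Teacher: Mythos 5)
Your proof is correct and follows essentially the same route as the paper's: reduce to the single summand $\bigl(\det\Im(\tau)/\det\Im(\tau)_{22}\bigr)^{s}=v_1^{s}$ by $Sp(2g,\mathbb{R})$-equivariance of $\Delta$, then evaluate $\Delta v_1^{s}$ in Iwasawa coordinates using Lemma \ref{lemmaMetric} and $\sqrt{|G|}\propto\prod_i v_i^{\,i-g-2}$. The only difference is that you make explicit the block-diagonality of the metric and the one-variable exponent count that the paper compresses into the phrase ``by direct computation.''
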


\vspace{.27 cm}

\begin{proof}

In Iwasawa coordinates  $\sqrt G =  \prod_{i=1}^{g} v_{i}^{i - g - 2}$.
With the help of lemma \ref{lemmaMetric}, by direct computation in Iwasawa coordinates one finds

\beq
\Delta \left(\frac{\det(\Im(\tau))}{\det(\Im(\tau)_{22})}\right)^s = \Delta v_{1}^s = 2^{\frac {g-1}{g+1}} s(g - s)v_{1}^{s}, \nn
\eeq
then, by $\pmb{\Gamma}_g$-invariance of the Laplacian operator one also has

\beq
\Delta \left(\frac{\det(\Im(\gamma(\tau)))}{\det(\Im( \gamma(\tau))_{22})}\right)^s =  2^{\frac {g-1}{g+1}} s(g - s) \left(\frac{\det(\Im(\gamma(\tau)))}{\det(\Im( \gamma(\tau))_{22})}\right)^s,
\qquad  \gamma \in Sp(2g,\mathbb{Z}), \nn
\eeq

eq. (\ref{guess3}) then follows.

\end{proof}

\end{document}